\newtheorem*{theorem*}{Theorem}
\newtheorem{main_theorem}{Theorem}
\newtheorem{theorem}{Theorem}[section]
\newtheorem{proposition}[theorem]{Proposition}
\def\del{\partial}
\def\dbar{\bar\partial}
\def\ddbar{\del\dbar}
\def\del{\partial}
\def\o{\omega}
\title{Geodesic Rays and K\"ahler--Ricci Trajectories on Fano Manifolds}
\author{Tam\'as Darvas\thanks{Research supported by BSF grant 2012236. } \ and Weiyong He\thanks{Research supported partially by NSF grant 1005392.\newline  2010 Mathematics subject classification 53C55, 32W20, 32U05.} }
\date{\vspace{-0.3in}}
\begin{document}
\maketitle
\begin{abstract}
Suppose $(X,J,\o)$ is a Fano manifold and $t \to r_t$ is a diverging K\"ahler-Ricci trajectory. We construct a bounded geodesic ray $t \to u_t$ weakly asymptotic to $t \to r_t$, along which Ding's $\mathcal F$--functional  decreases, partially confirming a folklore conjecture. In absence of non-trivial holomorphic vector fields this proves the equivalence between geodesic stability of the $\mathcal F$--functional and existence of K\"ahler--Einstein metrics. We also explore applications of our construction to Tian's $\alpha$--invariant.
\end{abstract}
\section{Introduction and Main Results}

Let $(X,J,\o)$ be a compact connected Fano K\"ahler manifold  normalized by $c_1(X)=[\o]_{dR}$. If $\o'$ is another K\"ahler metric on $X$ satisfying $[\o']_{dR}=[\o]_{dR}$, by the $\ddbar$--lemma of Hodge theory 
there exists a potential $\varphi \in C^\infty(X)$ such that
$$\o' = \o + i\ddbar \varphi,$$
and up to a constant $\varphi$ uniquely determines $\o'$. Hence, one can study K\"ahler metrics in the cohomology class of $\o$ by studying certain smooth functions. This motivates the introduction of the space of smooth K\"ahler potentials:
$$\mathcal H = \{ u \in C^\infty(X) | \ \o_u:=\o + i\partial \bar \partial u > 0\}.$$
Clearly, $\mathcal H$ is a Fr\'{e}chet manifold as an open subset of $C^{\infty}(X)$, so for $v \in \mathcal{H}$ one can identify $T_v \mathcal{H}$ with $C^{\infty}(X)$. Given $1 \leq p < \infty$, we introduce  $L^p$ type Finsler-metrics on $\mathcal H$:
\begin{equation}\label{FinslerDef}
\|\xi\|_{p,v}= \Big( \frac{1}{\textup{Vol}(X)}\int_X |\xi|^p \o_v^n \Big)^{\frac{1}{p}}, \ \ \ \ \xi \in T_v \mathcal{H},
\end{equation}
where $\textup{Vol}(X)=\int_X \o^n$ is an invariant of the class $\mathcal H$.  When $p=2$ we obtain the much studied Mabuchi Riemannian structure initially investigated in \cite{m,s,do} in connection with special K\"ahler metrics. As pointed out in \cite{da4}, in the case $p=1$ one recovers the strong topology/geometry of $\mathcal H$, as introduced in \cite{bbegz}, which proved to be extremely useful in the study of weak solutions to complex Monge--Amp\`ere equations. In considering the general case $p\geq 1$, we hope to unify the treatment of these two motivating examples.

Even more general Orlicz--Finsler structures were studied in \cite{da4} and we recall some of the notations and results of this paper before we state our main theorems. A curve $[0,1]\ni t \to \alpha_t \in \mathcal{H}$ is smooth if the function $\alpha(t,x)=\alpha_t(x) \in C^\infty([0,1] \times X)$. As usual, the length of a smooth curve $t \to \alpha_t$ is computed by the formula:
\begin{equation}\label{curve_length_def}
l_p(\alpha)=\int_0^1\|\dot \alpha_t\|_{p,\alpha_t}dt.
\end{equation}
The path length distance $d_p({u_0},{u_1})$ between ${u_0},{u_1} \in \mathcal H$ is the infimum of the length of smooth curves joining $u_0,u_1$. In \cite{da4} it is proved that $d_p(u_0,u_1)=0$ if and only if $u_0 = u_1$, thus $(\mathcal H, d_p)$ is a metric space, which is a generalization of a result of X.X. Chen in the case $p=2$ \cite{c}.

Let us recall some facts about the Riemannian case $p=2$. We will be very brief and for details we refer to the recent survey \cite{bl2}. In this case one can compute the the associated Levi-Civita connection $\nabla_{(\cdot)}(\cdot)$ and curvature tensor which is non-positive.

Suppose $S = \{ 0 < \textup{Re }s < 1\}\subset \Bbb C$. Following \cite{s}, one can argue that a smooth curve $[0,1] \ni t \to u_t \in \mathcal H$ connecting $u_0,u_1 \in \mathcal H$ is a Riemannian geodesic ($\nabla_{\dot u_t}\dot u_t=0$) if its complexification $u(s,x)=u_{\textup{Re }s}(x)$ is the (unique) smooth solution of the  following Dirichlet problem on $S \times X$:
\begin{alignat}{2}\label{BVPGeod}
&(\pi^* \o + i \partial \overline{\partial}u)^{n+1}=0 \nonumber\\
&u(t+ir,x) =u(t,x) \ \forall x \in X, t \in (0,1), r \in \Bbb R \\
&u(0,x)=u_0(x), u(1,x)=u_1(x), \ x \in X.\nonumber
\end{alignat}
Unfortunately, the above problem does not have smooth solutions (see \cite{lv,da1}), but a unique solution in the sense of Bedford-Taylor does exist such that $i\partial \bar \partial u$ has bounded coefficients (see \cite{c} with complements in \cite{bl}). The most general result about regularity was proved in \cite{bd,brm2} (see \cite{h1} for a different approach) but regularity higher then $C^{1,1}$ is not possible by examples provided in \cite{dl}. The resulting curve
$$[0,1] \ni t \to u_t \in \mathcal H_{\Delta} = \{ \Delta u \in L^\infty, \ \o + i\ddbar u \geq 0\}$$
is called the weak geodesic joining $u_0,u_1$. As we just explained, this curve leaves the space $\mathcal H$, hence it cannot be a Riemannian geodesic, but as argued in \cite{da4}, it interacts well with all the path length metrics $d_p$, i.e.
\begin{equation}\label{distgeod}
d_p(u_0,u_1)=\|\dot u_t \|_{p,u_t}, \ t \in [0,1], p \geq 1.
\end{equation}
In fact, $t \to u_t$ is an actual $d_p$--metric geodesic joining $u_0,u_1$ in the metric completion $\overline{(\mathcal H, d_p)}=(\mathcal E^p(X,\o),d_p)$ as we recall now.

The set of $\o$--plurisubharmonic functions is the following class:
$$\textup{PSH}(X,\o)=\{ u \in L^1(X), \textup{ u is u.s.c. and } \o + i\ddbar u\geq 0 \}.$$
If $u \in \textup{PSH}(X,\o)$, as explained in  \cite{gz}, one can define the non-pluripolar measure $\o_u^n$ that coincides with the usual Bedford-Taylor volume when $u$ is bounded. We say that $\o_u^n$ has full volume ($u \in \mathcal E(X,\omega)$) if $\int_X \o_u^n = \int_X \o^n$. Given  $v \in \mathcal E(X,\omega)$, we say that $v \in \mathcal E^p(X,\omega)$ if
$$\int_X  |v|^p \o_v^n < \infty,$$
The following trivial inclusion will be essential to us later:
\begin{equation}\label{trivinclusion}
\mathcal H_0 = \textup{PSH}(X,\o) \cap L^\infty(X) \subset \bigcap_{p \geq 1}\mathcal E^p(X,\o).
\end{equation}
For a quick review of finite energy classes $\mathcal E^p(X,\o)$ we refer to \cite[Section 2.3]{da3}. Next we recall the induced geodesic metric space structure on $\mathcal E^p(X,\o)$. Suppose $u_0,u_1 \in \mathcal E^p(X,\omega)$.  Let $\{u^k_0\}_{k \in \Bbb N},\{u^k_1 \}_{k \in \Bbb N}\subset \mathcal H$ be sequences decreasing pointwise to $u_0$ and $u_1$ respectively. By \cite{bk,de} it is always possible to find such approximating sequences. We define the metric $d_p(u_0,u_1)$ as follows:
\begin{equation}\label{EpDistDef}
d_p(u_0,u_1) = \lim_{k \to \infty} d_p(u^k_0,u^k_1).
\end{equation}
As justified in \cite[Theorem 2]{da4} the above limit exists is well defined and defines a metric on $\mathcal E^p(X,\o)$.

Let us also define geodesics in this space. Recall that by a $\rho$\emph{--geodesic} in a metric space $(M,\rho)$ we understand a curve $[a,b] \ni t \to g_t \in M$ for which there exists $C>0$ satisfying:
$$\rho(g_{t_1},g_{t_2}) = C|t_1 - t_2|, \ t_1,t_2 \in [a,b].$$
Let $u^k_t : [0,1] \to \mathcal H_{\Delta}$ be the weak geodesic joining $u^k_0,u^k_1$. We define $t\to u_t$ as the decreasing limit:
\begin{equation}\label{EpGeodDef}
u_t = \lim_{k \to + \infty}u^k_t, \ t \in (0,1).
\end{equation}
The curve $t \to u_t$ is well defined and  $u_t \in \mathcal E^p(X,\omega), \ t \in (0,1)$, as follows from the results of  \cite{da3}. By \cite[Theorem 2]{da4} this curve is a $d_p$--geodesic joining $u_0,u_1$ and we have
\begin{equation}\label{CompletionEq}
\overline{(\mathcal H,d_p)} = (\mathcal E^p(X,\o),d_p), \ \ p \geq 1.
\end{equation}

Functionals play an important role in the investigation of special K\"ahler metrics. Recall that the Aubin-Mabuchi energy and Ding's $\mathcal F$--functional are defined as follows:
\begin{equation}\label{AMdef}
AM(v)=\frac{1}{(n+1)\textup{Vol}(X)}\sum_{j=0}^n\int_{X} v\o^j\wedge (\o + i\partial\bar\partial v)^{n-j},
\end{equation}
\begin{equation}\label{Dingdef}
\mathcal F(v) = - AM(v) - \log \int_X e^{-v + f_\o}\o^n,
\end{equation}
where $v \in \mathcal H$ and $f_\o \in C^\infty(X)$ is the Ricci potential of $\o$, i.e. $\textup{Ric }\o =\o+i\partial\bar\partial f_\o$ normalized by
$\int_X e^{f_\o}\o^n =1.$ It was argued in \cite{da4} that both of these functionals are continuous with respect to all metrics $d_p$, hence extend to $\mathcal E^p(X,\o)$ continuously. Also, $AM$ is linear along the geodesics defined in \eqref{EpGeodDef}, whereas $\mathcal F$ is convex. As the map $u \to \o_u$ is translation invariant, one may want normalize K\"ahler potentials to obtain an equivalence between metrics and potentials. This can be done by only considering potentials from the "totally geodesic" hypersurfaces  $$\mathcal H_{AM} = \mathcal H \cap \{ AM(\cdot)=0\},$$
$$\mathcal H_{0,AM} = L^\infty(X) \cap \textup{PSH}(X,\o) \cap \{ AM(\cdot)=0\},$$
$$\mathcal E^p_{AM}(X,\o) = \mathcal E^p(X,\o) \cap \{ AM(\cdot)=0\}.$$
A smooth metric $\omega_{u_{KE}}$ is \emph{K\"ahler-Einstein} if $$\omega_{u_{KE}}=\textup{Ric }\omega_{u_{KE}}.$$
One can study such metrics by looking at the long time asymptotics of the Hamilton's K\"ahler--Ricci flow:
\begin{equation}\label{RicciflowEq}
\begin{cases}
\frac{d \o_{r_t}}{dt} = - \textup{Ric }\o_{r_t} + \o_{r_t}, \\
r_0 = v. \end{cases}
\end{equation}
As proved in \cite{ca}, for any $v \in \mathcal H_{AM}$, this problem has a smooth solution $$[0,1) \ni t \to r_t \in \mathcal H_{AM}.$$ It follows from a theorem of Perelman and work of Chen-Tian, Tian-Zhu and Phong-Song-Sturm-Weinkove,
that whenever a K\"ahler--Einstein metric cohomologous to $\o$ exists, then $\o_{r_t}$ converges exponentially fast to one such metric (see \cite{ct}, \cite{tz}, \cite{pssw}).

We remark that our choice of normalization is different from the alternatives used in the literature (see \cite[Chapter 6]{beg}). We choose to work with the normalization $AM(\cdot)=0$, as this seems to be the most natural one from the point of view of Mabuchi geometry. Indeed, that Aubin-Mabuchi energy is continuous with respect to all metrics $d_p$ and is linear along the geodesic segments defined in \eqref{EpGeodDef}. It will require some careful analysis, but as we shall see, from the point of view of long time asymptotics, this normalization is equivalent to other alternatives.

Suppose $(M,\rho)$ is a geodesic metric space and $[0,\infty) \ni t\to c_t \in \mathcal M$ is a continuous curve. We say that the unit speed $\rho$--geodesic ray $[0,\infty) \ni t\to g_t \in \mathcal M$ is \emph{weakly asymptotic} to the curve $t \to c_t$, if there exists $t_j \to \infty$ and unit speed $\rho$--geodesic segments $[0,\rho(c_0,c_{t_j})] \ni t\to g^j_t \in \mathcal M$ connecting $c_0$ and $c_{t_j}$ such that
$$\lim_{j \to\infty}\rho(g^j_t,g_t)=0, \ t \in [0,\infty).$$

We clearly need $\lim_j \rho(c_0,c_{t_j})=\infty$ in this last definition, hence to construct $d_p$--geodesic rays weakly asymptotic to diverging K\"ahler-Ricci trajectories, we first need to prove the following result, which improves on the main result of \cite{mc} and partly generalizes \cite[Theorem 6]{da4}. For a similar result about the Calabi metric we refer to \cite{cr}.

\begin{main_theorem} \label{RicciBounded}Suppose $(X,J,\o)$ is a Fano manifold and $p \geq 1$. There exists a K\"ahler--Einstein metric in $\mathcal H$ if and only if every K\"ahler--Ricci flow trajectory $[0,\infty) \ni t \to r_t \in \mathcal H_{AM}$ is $d_p$--bounded. More precisely, the $C^0$ bound along the flow is equivalent to the $d_p$ bound:
$$\frac{1}{C}d_p(r_0,r_t) - C\leq \sup_X |r_t| \leq C d_p(r_0,r_t) + C,$$
for some $C(p,r)>1$.
\end{main_theorem}

Using this theorem, the recently established convexity of the K-energy functional from \cite{bb} (for a different approach see \cite{clp}), the compactness theorem of \cite{bbegz}, and the divergence analysis of K\"ahler-Ricci trajectories from \cite{r1}, we establish our main result:

\begin{main_theorem} \label{geodconstruct}Suppose $(X,J,\o)$ is a Fano manifold without a K\"ahler--Einstein metric in $\mathcal H$ and $[0,\infty) \ni t \to r_t \in \mathcal H_{AM}$ is a K\"ahler-Ricci trajectory. Then there exists a curve $[0,\infty) \ni t \to u_t \in \mathcal H_{0,AM}$ which is a $d_p$--geodesic ray weakly asymptotic to $t \to r_t$ for all $p \geq 1$. In addition to this, $t \to u_t$ satisfies the following:
\begin{itemize}
\item[(i)] $t \to \mathcal F(u_t)$ is decreasing,
\item[(ii)] the "sup-normalized" potentials $u_t - \sup_X (u_t - u_0) \in \mathcal H_0$ decrease pointwise to $u_\infty \in \textup{PSH}(X,\o)$ for which $\int_X e^{-\frac{n}{n+1}u_\infty}\o^n =\infty$.
\end{itemize}
If additionally $(X,J)$ does not admit non--trivial holomorphic vector fields, then $t \to \mathcal F(u_t)$ is strictly decreasing.
\end{main_theorem}

We note that the normalizing condition $AM(u_t)=0$ in the above result assures that geodesic ray $t \to u_t$ is  \emph{non--trivial}, i.e. $u_t \neq u_0+ ct$.

This theorem provides a partial answer to a folklore conjecture, perhaps first suggested by \cite{lnt}, which says that one should be able to construct "destabilizing" geodesic rays asymptotic to diverging K\"ahler-Ricci trajectories. For a precise statement and connections with other results we refer to \cite[Conjecture 4.10]{r1}.

Given their connection with special K\"ahler metrics, constructing geodesic rays in the space of K\"ahler potentials from geometric data has drawn a lot of interest. We mention \cite{ph1,ph2}, where the authors constructed rays out of algebraic test configurations. The work \cite{rwn}, builds on this and constructs rays out of more general analytic test configurations via their Legendre transform. For related results we also mention \cite{at, ct2, sz,rz} in a fast expanding literature. Perhaps one of the advantages of our method is that the ray we construct instantly gives geometric information about special K\"ahler metrics without further results, as it will be evidenced in Theorem 3 below.

We hope that the methods developed here will be the building blocks of future results constructing geodesic rays asymptotic to different (geometric) flow trajectories. Motivated by this we prove a very general result in Theorem \ref{curvegeod} from which Theorem 2 will follow.

On Fano manifolds not admitting K\"ahler-Einstein metrics, part (ii) of Theorem 2 ensures the bound $\alpha(X) \leq n/(n+1)$ for Tian's alpha invariant:
$$\alpha(X)=\sup \Big\{ \alpha , \int_X e^{-\alpha(u-\sup_X u)}\o^n \leq C_\alpha <+\infty, \ u \in \textup{PSH}(X,\o) \Big\}.$$
This is a well known result of Tian \cite{t1}. The fact that the geodesic ray $t \to u_t$ is able to detect a potential $u_\infty$ satisfying $\int_X e^{-\frac{n}{n+1}u}\o^n =\infty$, is analogous to the main result of \cite{r1}, where it is shown that one can find such potential using a subsequence of metrics along a diverging  K\"ahler-Ricci trajectory. We refer to this paper for relations with Nadel sheaves.

It would be interesting to see if a geodesic ray produced by the above theorem is in fact unique. We prove that this ray is bounded, but it is not clear if this curve has more regularity. Finally, we believe that $t \to \mathcal F(u_t)$ is strictly decreasing regardless whether $(X,J)$ admits non--trivial holomorphic vector fields or not and prove this in the case when the Futaki invariant is non--zero (Proposition \ref{FutakiStrict}).

Lastly, we note the following theorem, which is a consequence of the previous result, and in the case $p=2$ gives the K\"ahler-Einstein analog of Donaldson's conjectures on existence of constant scalar curvature metrics \cite{do,h2}:

\begin{main_theorem} \label{raystability}Suppose $p \in \{1,2 \}$ and $(X,J,\o)$ is a Fano manifold without non--trivial holomorphic vector--fields and $u \in \mathcal H$. There exists no K\"ahler-Einstein metric in $\mathcal H$ if and only if for any $u \in \mathcal H$ there exists a $d_p$--geodesic ray $[0,\infty) \ni t \to u_t \in \mathcal H_{0,AM}$ with $u_0=u$ such that the function $t \to \mathcal F(u_t)$ is strictly decreasing.
\end{main_theorem}

As pointed out to us by R. Berman, for $p=2$ this last theorem follows from \cite{brm1} and the recently established equivalence between K--stability and existence of K\"ahler--Einstein metrics. Our approach however is purely analytical and avoids the use of algebro--geometric techniques.

Although we do not pursue such generality, we remark that Theorem 1 and Theorem 2 also hold for the very general Orlicz-Finsler structures $(\mathcal H,d_\chi)$ studied in \cite{da4}.

\section{Preliminaries}

\subsection{The Metric Spaces $(\mathcal H,d_p)$} In this short paragraph we further elaborate on the metric spaces $(\mathcal H,d_p)$. By the definition, we have the inclusion
$\mathcal E^p(X,\o) \subset \mathcal E^{p'}(X,\o),$
for $p' \leq p$ and also the metric $d_p$ dominates $d_{p'}$. What is more, it follows that for $u_0,u_1 \in \mathcal E^p(X,\o)$, the curve defined in $\eqref{EpGeodDef}$ is a geodesic with respect to both $d_p$ and $d_p'$ (perhaps of different length).
Using this and \eqref{trivinclusion} we can conclude the following crucial observation.
\begin{proposition}\label{samegeod}For $u_0,u_1 \in \mathcal H_0$, the curve $[0,1] \ni t \to u_t \in \mathcal H_0$ from \eqref{EpGeodDef} will be a $d_p$--geodesic joining $u_0,u_1$ for all $p \geq 1$.
\end{proposition}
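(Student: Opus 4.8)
The plan is to exploit the fact that for bounded potentials all the metric structures $d_p$ are built from the \emph{same} underlying weak geodesic, namely the Bedford--Taylor solution of the homogeneous complex \MA equation \eqref{BVPGeod}. Concretely, fix $u_0,u_1 \in \mathcal H_0$. Since $u_0,u_1$ are bounded, Chen's theorem (with the complements in \cite{bl}) produces the weak geodesic $[0,1]\ni t\to u_t\in\mathcal H_\Delta$, and by the maximum principle this curve stays uniformly bounded, so in fact $u_t\in\mathcal H_0$ for every $t$. In particular $u_t\in\bigcap_{p\ge 1}\mathcal E^p(X,\o)$ by \eqref{trivinclusion}, so it makes sense to ask whether $t\to u_t$ is a $d_p$--geodesic for each $p$.

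The first step is to observe that this very curve $t\to u_t$ is, by construction, exactly the curve appearing in \eqref{EpGeodDef}: indeed one may take the constant approximating sequences $u_0^k\equiv u_0$, $u_1^k\equiv u_1$ (already in $\mathcal H$, or if one insists on strict positivity, decreasing smooth approximants whose weak geodesics decrease to $t\to u_t$ by the comparison principle for the \MA operator). Hence \eqref{EpGeodDef} applied with these data yields precisely $t\to u_t$, and by \cite[Theorem 2]{da4} — recalled in \eqref{CompletionEq} and the paragraph preceding it — this curve is a $d_p$--geodesic joining $u_0$ and $u_1$ inside $(\mathcal E^p(X,\o),d_p)$, for every $p\ge 1$. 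The only thing that changes with $p$ is the constant $C$ in the relation $d_p(u_{t_1},u_{t_2})=C|t_1-t_2|$; the underlying curve is $p$--independent. This is the content of the remark already made in the excerpt that ``the curve defined in \eqref{EpGeodDef} is a geodesic with respect to both $d_p$ and $d_{p'}$ (perhaps of different length)'', and the proposition is essentially the packaging of that remark together with the boundedness just noted.

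The one point requiring genuine care — and the step I expect to be the main obstacle — is the identification of the decreasing limit in \eqref{EpGeodDef} with the classical bounded weak geodesic of \eqref{BVPGeod} when one uses \emph{smooth} strictly positive approximants $u_i^k\searrow u_i$ rather than the constant ones. This needs the stability/comparison principle for the homogeneous \MA equation: if $u_i^k\searrow u_i$ then the corresponding weak geodesics $u_t^k\searrow v_t$ for some $v_t\in\mathrm{PSH}$, and one must check $v_t$ solves \eqref{BVPGeod} with boundary data $u_0,u_1$ and is bounded, hence by uniqueness (Bedford--Taylor) equals $u_t$. The boundedness of $v_t$ is immediate from sandwiching between the weak geodesics of $(\min u_0,\min u_1)$ and $(u_0^1,u_1^1)$; that $v_t$ has the right boundary values follows from monotone convergence of the envelopes defining the geodesic; and that it is \MA--harmonic follows because a decreasing limit of bounded \MA--harmonic functions, if bounded, is again \MA--harmonic (continuity of the \MA operator under monotone bounded sequences, \cite{bk}). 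With this in hand, everything reduces to \cite[Theorem 2]{da4}, and the proposition follows.
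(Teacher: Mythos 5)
Your argument is correct and follows essentially the same route as the paper, which gives no formal proof beyond the remark that the curve of \eqref{EpGeodDef} is $p$--independent and is a $d_p$--geodesic for every $p$ with $u_0,u_1\in\mathcal E^p(X,\o)$ (by \cite[Theorem 2]{da4}), combined with the inclusion \eqref{trivinclusion}. Your additional identification of the decreasing limit with the bounded Bedford--Taylor geodesic is a reasonable way to justify that the curve indeed lands in $\mathcal H_0$, but it is not a point the paper dwells on.
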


We note that for $p \neq 2$, the $d_p$--geodesic connecting $u_0,u_1$ may not be unique. See \cite{da4} for examples of $d_1$--geodesic segments that are different from the ones defined in \eqref{EpGeodDef}.

In hopes of characterizing convergence in the metric completion $(\mathcal E^p(X,\o),d_p)$ more explicitly, for $u_0,u_1 \in \mathcal E^p(X,\o)$ one can introduce the following functional (see \cite{da4,g}):
$$I_p(u_0,u_1) = \Big(\int_X |u_0 - u_1|^p \o_{u_0}^n\Big)^{1/p} + \Big(\int_X|u_0 - u_1|^p \o_{u_1}^n\Big)^{1/p}. $$
In \cite[Theorem 3]{da4} it is proved that there exists $C(p) >1$ such that
\begin{equation}\label{mabenergeqv}\frac{1}{C}I_p(u_0,u_1) \leq d_p(u_0,u_1) \leq C I_p(u_0 , u_1).
\end{equation}
This double estimate implies that there exists $C(p) > 1$ such that
\begin{equation}\label{supmabest}
\sup_X u \leq C d_p(u,0) + C.
\end{equation}
Also, if $d_p(u_k, u) \to 0$ then $u_k \to u$ a.e. and also $\o_{u_k}^n \to \o^n_u$ weakly.  For more details we refer to \cite[Theorems 3-6]{da4}. Our first observation says that in the presence of uniform $C^0$--estimates all the $d_p$ geometries are equivalent.
\begin{proposition} \label{boundedconvergence} Suppose $\{u_k\}_{k \in \Bbb N} \subset \mathcal H_0 = \textup{PSH}(X,\o) \cap L^\infty$  and  $\| u_k\|_{L^\infty} \leq D$ for some $D >0$. Then $\{u_k\}_{k \in \Bbb N}$ is $d_p$--Cauchy if and only if it is $d_1$--Cauchy. If this condition holds then in addition the limit $u = \lim_k u_k$ also satisfies $\| u\|_{L^\infty} \leq D$.
\end{proposition}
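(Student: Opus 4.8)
The plan is to deduce the equivalence from the interpolation estimate \eqref{mabenergeqv} together with the elementary $L^p$-interpolation inequality for bounded functions. First I would note that the implication "$d_p$-Cauchy $\Rightarrow$ $d_1$-Cauchy" is immediate, since $d_1 \leq d_p$ by the remark following Proposition \ref{samegeod} (the $L^p$-norm dominates the $L^1$-norm on a probability space). For the converse, fix $u_j, u_k \in \mathcal H_0$ with $\|u_j\|_{L^\infty}, \|u_k\|_{L^\infty} \leq D$. By \eqref{mabenergeqv} it suffices to bound $I_p(u_j,u_k)$, and by definition of $I_p$ it is enough to bound each term $\int_X |u_j - u_k|^p \o_{u_j}^n$. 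Since $|u_j - u_k| \leq 2D$ pointwise, we get
$$\int_X |u_j - u_k|^p \o_{u_j}^n \leq (2D)^{p-1}\int_X |u_j - u_k| \o_{u_j}^n \leq (2D)^{p-1} \, \textup{Vol}(X) \cdot C \, d_1(u_j, u_k),$$
where the last step again uses \eqref{mabenergeqv}, now with $p=1$, to dominate $\int_X |u_j-u_k|\o_{u_j}^n \leq I_1(u_j,u_k) \leq C d_1(u_j,u_k)$. The same bound holds for the $\o_{u_k}^n$ term, hence $I_p(u_j,u_k) \leq C(D,p)\, d_1(u_j,u_k)^{1/p}$, and therefore $d_p(u_j,u_k) \leq C(D,p)\, d_1(u_j,u_k)^{1/p}$. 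If $\{u_k\}$ is $d_1$-Cauchy, the right-hand side tends to $0$, so $\{u_k\}$ is $d_p$-Cauchy.

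It remains to verify the claim about the limit. Assuming $\{u_k\}$ is $d_1$-Cauchy (equivalently $d_p$-Cauchy), let $u \in \mathcal E^p(X,\o)$ be the $d_p$-limit. By the facts recalled after \eqref{mabenergeqv}, $d_p(u_k,u) \to 0$ forces $u_k \to u$ almost everywhere. Since $|u_k| \leq D$ a.e. for every $k$, the pointwise a.e. limit satisfies $|u| \leq D$ a.e.; as $u \in \textup{PSH}(X,\o)$ is upper semicontinuous, the essential sup bound upgrades to $\sup_X u \leq D$ and likewise $\inf$, so $u \in \mathcal H_0$ with $\|u\|_{L^\infty} \leq D$.

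I do not anticipate a genuine obstacle here; the argument is a soft consequence of \eqref{mabenergeqv}. The only point requiring mild care is the uniform constant: one must check that $C(D,p)$ above is independent of $j,k$, which it clearly is since it depends only on $D$, $p$, $\textup{Vol}(X)$, and the universal constant from \eqref{mabenergeqv}. A secondary technical point is the passage from an a.e. bound to the pointwise bound $\sup_X u \leq D$ for the $\o$-psh representative, which is standard: an $\o$-psh function equals the a.e.-limsup of its values, so it cannot exceed its essential supremum.
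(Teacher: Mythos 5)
Your proposal is correct and follows essentially the same route as the paper: the equivalence is exactly the combination of \eqref{mabenergeqv} with the elementary interpolation $\int|u_j-u_k|^p\o_{u_j}^n\le (2D)^{p-1}\int|u_j-u_k|\o_{u_j}^n$ (the paper's ``basic facts about $L^p$ norms''), and the $L^\infty$ bound on the limit comes from a.e.\ convergence (which the paper obtains via convergence in capacity from \cite[Theorem 5(i)]{da4}). Your argument just writes out the details the paper leaves implicit.
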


\begin{proof} The equivalence follows from \eqref{mabenergeqv} and basic facts about $L^p$ norms. The estimate $\| u\|_{L^\infty} \leq D$ also follows, as from \cite[Theorem 5(i)]{da4} we have $u_k \to u$ in capacity, hence $u_k \to u$ pointwise a.e..
\end{proof}

We recall the compactness theorem \cite[Theorem 2.17]{bbegz}. Before we write down the statement, let us first recall the notion of strong convergence and and entropy.
As introduced in \cite{bbegz}, we say that a sequence $u_k \in \mathcal H$ converges \emph{strongly} to $u \in \mathcal E^1(X,\o)$ if $u_k \to_{L^1} u$ and $AM(u_k) \to AM(u)$. As argued in \cite[Proposition 5.9]{da4}, one has $u_k \to u$ strongly if and only if $I_1(u_k,u) \to 0$, which in turn is equivalent to $d_1(u_k,u) \to 0$ according to \eqref{mabenergeqv}. The Mabuchi \emph{K-energy} functional
$\mathcal M: \mathcal H \to \Bbb R$, which will be used by us later, is given by the following formula:
$$\mathcal M(u) = n AM(u) - L(u) + H_{\o}(\o_u),$$
where $H_{\o}(\o_u) = \int_X \log(\o_u^n/\o^n)\o_u^n$ is the \emph{entropy} of $\o_u^n$ with respect to $\o^n$ and $L(u)$ is the following operator:
$$L(u) = \sum_{j=0}^{n-1}\int_X u \ {\textup{Ric } \o} \wedge \o_u^{j} \wedge \o^{n-1-j}.$$
In the presence of bounded entropy the following compactness result holds:
\begin{proposition}\textup{\cite[Proposition 2.6, Theorem 2.17]{bbegz}} \label{bbegzcomp} Suppose $\{ u_k\}_{k \in \Bbb N} \subset \mathcal H$ is such that $|\sup_X u_k|, H_{\o}(\o_{u_k}) \leq D$ for some $D \geq0$. Then there exists $u \in \mathcal E^1(X,\o)$ and $k_l \to \infty$ such that $\lim_{l \to \infty} d_1(u_{k_l},u)=0$.
\end{proposition}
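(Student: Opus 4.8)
The plan is to first extract an $L^1$-convergent subsequence by classical pluripotential theory and then to use the entropy bound to upgrade this to convergence in the strong (i.e.\ $d_1$) topology; this is \cite[Proposition 2.6, Theorem 2.17]{bbegz}, and I only sketch the strategy. Since $|\sup_X u_k|\le D$ the family $\{u_k\}$ is bounded in $L^1(X,\o^n)$, and as $\textup{PSH}(X,\o)$ is closed under $L^1$-limits there are $k_l\to\infty$ and $u\in\textup{PSH}(X,\o)$ with $u_{k_l}\to u$ in $L^1$, $\sup_X u\le D$, and --- after relabelling --- with $\sup_X u_{k_l}$ and $AM(u_{k_l})$ convergent, the latter in $[-\infty,D]$ (the upper bound being the trivial $AM(u_{k_l})\le\sup_X u_{k_l}\le D$).

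The next step is to rule out $\lim_l AM(u_{k_l})=-\infty$ and thereby obtain $u\in\mathcal E^1(X,\o)$. On one hand $AM$ is upper semicontinuous for $L^1$-convergence on $\textup{PSH}(X,\o)$ (a standard property of the \MA energy), so $\lim_l AM(u_{k_l})\le AM(u)$. On the other hand, writing $f_k=\o_{u_k}^n/\o^n$, Jensen's inequality gives $H_\o(\o_{u_k})\ge 0$ and, together with the hypothesis, $\sup_k\int_X f_k\log^+ f_k\,\o^n<\infty$, so by the de la Vall\'ee-Poussin criterion the measures $\o_{u_k}^n$ are uniformly integrable with respect to $\o^n$. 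After normalizing $\sup_X u_k=0$ one has the elementary lower bound $AM(u_k)\ge\frac1{\vol(X)}\int_X u_k\,\o_{u_k}^n$, whose right-hand side is bounded below by an Orlicz--H\"older estimate combining the $L\log L$-bound on $f_k$ (from the entropy) with a uniform Skoda exponential estimate for $u_k$ (available since $\sup_X u_k=0$). Hence $\{AM(u_k)\}$ is bounded below, so $AM(u)>-\infty$ and $u\in\mathcal E^1(X,\o)$.

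It remains to prove $\lim_l AM(u_{k_l})=AM(u)$, which by \cite[Proposition 5.9]{da4} and \eqref{mabenergeqv} is equivalent, given the $L^1$-convergence, to $d_1(u_{k_l},u)\to 0$ --- equivalently, $u$ is the limit of $\{u_{k_l}\}$ in the complete metric space $(\mathcal E^1(X,\o),d_1)$. By the upper semicontinuity already used, the only remaining inequality is $\liminf_l AM(u_{k_l})\ge AM(u)$, i.e.\ that no \MA energy escapes in the limit, and I expect this to be the main obstacle. The idea would be to compare $u_{k_l}$ with smooth decreasing regularizations $u^\eps\searrow u$ (so that $AM(u^\eps)\to AM(u)$) via the cocycle formula for $AM$, controlling the mixed terms $\int_X (u^\eps-u_{k_l})^+\,\o_{u_{k_l}}^j\w\o_{u^\eps}^{n-j}$ by the uniform integrability of $\{\o_{u_{k_l}}^n\}$ supplied by the entropy bound; this is exactly where the hypothesis is indispensable, since it prevents the measures $\o_{u_{k_l}}^n$ from concentrating on pluripolar (or merely thin) sets. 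Letting $\eps\to 0$ would then yield the desired inequality. Rigorously this is carried out in \cite{bbegz}, via the Legendre-duality analysis showing that $AM$ is continuous on sublevel sets of the entropy; granting it, $d_1(u_{k_l},u)\to 0$, which with the second step establishes the proposition.
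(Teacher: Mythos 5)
The paper offers no proof of this proposition at all -- it is imported verbatim as \cite[Proposition 2.6, Theorem 2.17]{bbegz} -- so there is nothing internal to compare your argument against; your sketch is a reconstruction of the cited argument and its outline is correct. The first two steps are sound: $L^1$-compactness from the $\sup$-bound, $AM(u_k)\leq \sup_X u_k \leq D$, upper semicontinuity of $AM$, and the lower bound $AM(u_k)\geq \frac{1}{\textup{Vol}(X)}\int_X u_k\,\o_{u_k}^n$ controlled via the Young-type duality between the $L\log L$ bound on $\o_{u_k}^n/\o^n$ (entropy) and the uniform Skoda exponential estimate (sup-normalization), which together place $u$ in $\mathcal E^1(X,\o)$. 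For the remaining inequality $\liminf_l AM(u_{k_l})\geq AM(u)$, which you correctly identify as the crux but defer to \cite{bbegz} via a somewhat roundabout regularization scheme, note that there is a more direct route in the same spirit as your second step: concavity of $AM$ gives $AM(u_{k_l})-AM(u)\geq \frac{1}{\textup{Vol}(X)}\int_X (u_{k_l}-u)\,\o_{u_{k_l}}^n$, and the right-hand side tends to $0$ by Vitali's theorem, since $u_{k_l}-u\to 0$ in measure while the integrands are uniformly integrable by exactly the same $L\log L$ versus exponential-class duality already invoked. With that observation your sketch closes into a complete proof without appealing to the Legendre-duality machinery of \cite{bbegz}.
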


Putting together the last two results we can write:

\begin{theorem}\label{boundedcomp} Suppose $\{ u_k\}_{k \in \Bbb N} \subset \mathcal H$ is such that $H_{\o}(\o_{u_k}), \| u_k\|_{L^\infty} \leq D$ for some $D \geq0$. Then there exists $u \in \mathcal H_0$ with $\| u\|_{L^\infty} \leq D$ and $k_l \to \infty$ such that  $d_p(u_{k_l},u) \to 0$ for all $p \geq 1$.
\end{theorem}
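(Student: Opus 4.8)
The plan is to combine the two propositions that immediately precede this theorem: Proposition~\ref{bbegzcomp}, which yields a $d_1$--convergent subsequence under bounded entropy and bounded sup-norm, and Proposition~\ref{boundedconvergence}, which upgrades $d_1$--convergence to $d_p$--convergence (for all $p\ge1$) in the presence of a uniform $C^0$ bound, while also transferring that $C^0$ bound to the limit. First I would apply Proposition~\ref{bbegzcomp} to the sequence $\{u_k\}$, whose hypotheses $|\sup_X u_k|, H_\o(\o_{u_k}) \le D$ are exactly what is assumed here: this produces $u \in \mathcal E^1(X,\o)$ and a subsequence $k_l \to \infty$ with $d_1(u_{k_l},u) \to 0$.

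Next, I would observe that the subsequence $\{u_{k_l}\}$ lies in $\mathcal H_0$ with $\|u_{k_l}\|_{L^\infty} \le D$ --- this needs the uniform $C^0$ bound on the $u_k$, which is part of the hypothesis (note that the sup-norm bound assumed in the theorem, $\|u_k\|_{L^\infty}\le D$, is genuinely stronger than the one-sided bound $|\sup_X u_k|\le D$ used in Proposition~\ref{bbegzcomp}, so both propositions apply). Being $d_1$--Cauchy, $\{u_{k_l}\}$ is in particular a $d_1$--Cauchy sequence of uniformly bounded potentials, so Proposition~\ref{boundedconvergence} applies: the sequence is $d_p$--Cauchy for every $p\ge1$, and its limit satisfies $\|u\|_{L^\infty}\le D$, hence $u \in \mathcal H_0$. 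It remains only to check that the $d_p$--limit of $\{u_{k_l}\}$ coincides with the earlier $d_1$--limit $u$; this is immediate since $d_p \ge d_1$ forces any $d_p$--limit to also be a $d_1$--limit, and $d_1$--limits are unique. Therefore $d_p(u_{k_l},u)\to0$ for all $p\ge1$, as claimed.

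There is essentially no obstacle here: the theorem is a bookkeeping synthesis of the two preceding propositions together with the basic monotonicity $d_{p'} \le d_p$ for $p' \le p$ recalled at the start of Section~2.1. The only point requiring a word of care is the identification of limits across different $p$, which I would handle exactly as above via uniqueness of $d_1$--limits. I would write the proof in three or four sentences.

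\begin{proof}
By Proposition~\ref{bbegzcomp} there exist $u \in \mathcal E^1(X,\o)$ and $k_l \to \infty$ with $d_1(u_{k_l},u) \to 0$. Since $\|u_k\|_{L^\infty} \leq D$, the subsequence $\{u_{k_l}\}_{l \in \Bbb N} \subset \mathcal H_0$ is uniformly bounded by $D$ and is $d_1$--Cauchy, so Proposition~\ref{boundedconvergence} shows that it is $d_p$--Cauchy for every $p \geq 1$ and that its limit lies in $\mathcal H_0$ with $L^\infty$--norm at most $D$. As $d_p \geq d_1$, any $d_p$--limit of $\{u_{k_l}\}$ is also a $d_1$--limit, and $d_1$--limits are unique, so this limit equals $u$. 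Hence $u \in \mathcal H_0$, $\|u\|_{L^\infty} \leq D$, and $d_p(u_{k_l},u) \to 0$ for all $p \geq 1$.
\end{proof}
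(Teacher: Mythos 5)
Your proof is correct and is exactly the argument the paper intends: the theorem is stated with the preamble ``Putting together the last two results we can write,'' i.e.\ one applies Proposition~\ref{bbegzcomp} to extract a $d_1$--convergent subsequence and then Proposition~\ref{boundedconvergence} to upgrade to $d_p$--convergence and transfer the $L^\infty$ bound to the limit. Your additional care about identifying the $d_p$-- and $d_1$--limits via $d_p \geq d_1$ is a reasonable bit of bookkeeping the paper leaves implicit.
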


In our computations we will need the following bound for the $L$ functional in the expression of the Mabuchi K-energy:

\begin{proposition}\label{LEst} For any $p \geq 1$ there exists $C(p) >1$ such that
$$|L(u)| \leq C d_p(0,u), \ u \in \mathcal H.$$
\end{proposition}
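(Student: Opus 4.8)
The plan is to expand $L(u)=\sum_{j=0}^{n-1}a_j$ with $a_j:=\int_X u\,\Ric\o\w\o_u^j\w\o^{n-1-j}$, and to control each $a_j$ by tracking how it changes as the index $j$ varies, rather than by a brute-force estimate. Fix once and for all a constant $C_0=C_0(X,\o)>0$ with $-C_0\,\o\le\Ric\o\le C_0\,\o$ as real $(1,1)$-forms; such a $C_0$ exists by compactness of $X$. The base case is immediate: since $|\Ric\o\w\o^{n-1}|\le C_0\,\o^n$ as measures, one gets $|a_0|\le C_0\int_X|u|\,\o^n$.

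Next I would compute consecutive differences. Using $\o-\o_u=-\i\ddbar u$ together with the integration by parts identity $\int_X u\,\i\ddbar u\w\gamma=-\int_X\i\del u\w\dbar u\w\gamma$ (valid for $u$ smooth and $\gamma$ a smooth closed real $(n-1,n-1)$-form), applied with $\gamma=\Ric\o\w\o_u^j\w\o^{n-2-j}$, we get for $0\le j\le n-2$
$$a_j-a_{j+1}=-\int_X u\,\i\ddbar u\w\Ric\o\w\o_u^j\w\o^{n-2-j}=\int_X\i\del u\w\dbar u\w\Ric\o\w\o_u^j\w\o^{n-2-j}.$$
Since $\i\del u\w\dbar u\w\o_u^j\w\o^{n-2-j}$ is a nonnegative $(n-1,n-1)$-form, wedging the two-sided bound $-C_0\o\le\Ric\o\le C_0\o$ against it gives $|a_j-a_{j+1}|\le C_0\,b_j$, where
$$b_j:=\int_X\i\del u\w\dbar u\w\o_u^j\w\o^{n-1-j}\ \ge\ 0 .$$
The same integration by parts with $\Ric\o$ removed identifies $b_j=\int_X u\,\o_u^j\w\o^{n-j}-\int_X u\,\o_u^{j+1}\w\o^{n-1-j}$, so the sum telescopes to a cohomological quantity:
$$\sum_{j=0}^{n-1}b_j=\int_X u\,\o^n-\int_X u\,\o_u^n\ \le\ \int_X|u|\,\o^n+\int_X|u|\,\o_u^n .$$

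Assembling: writing $a_j=a_0-\sum_{i<j}(a_i-a_{i+1})$ and summing over $j$, then using $b_i\ge 0$ and the telescoping identity just obtained,
$$|L(u)|\ \le\ n|a_0|+n\sum_{i=0}^{n-2}|a_i-a_{i+1}|\ \le\ nC_0\int_X|u|\,\o^n+nC_0\sum_{i=0}^{n-1}b_i\ \le\ 2nC_0\Big(\int_X|u|\,\o^n+\int_X|u|\,\o_u^n\Big).$$
Finally, $\o^n$ and $\o_u^n$ both have total mass $\textup{Vol}(X)$, so H\"older's inequality yields $\int_X|u|\,\o^n\le\textup{Vol}(X)^{1-1/p}\big(\int_X|u|^p\o^n\big)^{1/p}$ and likewise for $\o_u^n$; adding these, the right-hand side above is bounded by $2nC_0\,\textup{Vol}(X)^{1-1/p}\,I_p(0,u)$, which by \eqref{mabenergeqv} is $\le C(p)\,d_p(0,u)$ with $C(p)$ absorbing $\textup{Vol}(X)^{1-1/p}$ and the constant of \eqref{mabenergeqv}. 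This is the asserted bound, and note that it comes out with no additive constant.

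The one delicate point — and the reason one should not estimate $L(u)$ term by term, nor route through the decomposition $\o=\Ric\o-\i\ddbar f_\o$ — is the appearance of mixed masses $\int_X|u|\,\o_u^j\w\o^{n-j}$ at intermediate $j$: these are \emph{not} controlled by $I_p(0,u)$, which only sees $\o_u^n$ and $\o^n$, and attempting to reduce to $u\le0$ by subtracting $\sup_X u$ introduces a spurious constant because \eqref{supmabest} controls $\sup_X u$ only up to an additive $C$. The telescoping above is exactly what repairs this: the offending combinations reassemble into the \emph{nonnegative} Dirichlet-type quantities $b_j$, whose sum is the cohomological expression $\int_X u(\o^n-\o_u^n)$ that \emph{is} controlled by $I_p(0,u)$. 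It is the sign-definiteness of $\i\del u\w\dbar u$ that makes this step go through.
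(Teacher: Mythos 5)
Your proof is correct, and it takes a genuinely different route from the paper's. The paper estimates $L(u)$ term by term: after using $-C\o\le\Ric\o\le C\o$ it is left with the mixed masses $\int_X|u|\,\o^j\w\o_u^{n-j}$, and it absorbs all of these at once into $\int_X|u/2|\,\o_{u/2}^n$ via the identity $\o_{u/2}=\tfrac12(\o+\o_u)$, which gives $\o^j\w\o_u^{n-j}\le 2^n\o_{u/2}^n$; H\"older and \eqref{mabenergeqv} then yield $|L(u)|\le C\,d_p(0,u/2)$, and a final appeal to \cite[Lemma 5.3]{da4} converts $d_p(0,u/2)$ into $d_p(0,u)$. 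You instead integrate by parts and telescope, converting the dependence on intermediate mixed masses into the nonnegative Dirichlet-type quantities $b_j$ whose sum is the cohomological expression $\int_X u\,(\o^n-\o_u^n)$, which $I_p(0,u)$ controls directly. What each approach buys: yours is self-contained modulo \eqref{mabenergeqv} (no rescaling lemma from \cite{da4} is needed), at the cost of a Stokes/positivity argument; the paper's is a two-line reduction once that lemma is granted. One small correction to your closing commentary: the mixed masses $\int_X|u|\,\o_u^j\w\o^{n-j}$ \emph{are} in fact controlled by $C\,d_p(0,u)$, and hence by $C\,I_p(0,u)$ via \eqref{mabenergeqv} --- that is precisely what the $u\mapsto u/2$ device achieves --- so the term-by-term route is not blocked; what is true is only that a direct H\"older estimate against the two measures $\o^n$ and $\o_u^n$ appearing in $I_p(0,u)$ does not reach them without that extra step.
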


\begin{proof} There exists $C > 0$ such that $\textup{Ric } \o \leq C \o$. We can start writing:
\begin{flalign*}
|L(u)| & \leq C \sum_{j=1}^n \int_X |u| \o^{j} \wedge \o^{n-j}_u \\
&\leq C \int_X \Big|\frac{u}{2}\Big| \o_{\frac{u}{2}}^n \\
&\leq C \Big(\int_X \Big|\frac{u}{2}\Big|^p \o_{\frac{u}{2}}^n\Big)^{1/p} \\
&\leq C d_p\Big(0,\frac{u}{2}\Big)\\
& \leq C d_p(0,u),
\end{flalign*}
where in the penultimate inequality we have used \eqref{mabenergeqv} and in the last inequality we have used \cite[Lemma 5.3]{da4}.
\end{proof}

Finally, we recall a result about bounded geodesics which will be very useful to us later:

\begin{theorem}\textup{\cite[Theorem 1]{da2}} \label{m_norm_thm} Given a bounded weak geodesic $[0,1] \ni t  \to u_t \in \mathcal H_0$ connecting $u_0,u_1 \in \mathcal H_0$, i.e. a bounded solution to \eqref{BVPGeod}, there exists $M_u,m_u \in \Bbb R$ such that for any $a,b \in [0,1]$ we have
\begin{enumerate}
\item[(i)] $\inf_{X}\frac{u_a-u_b}{a-b}= m_u,$
\item[(ii)] $\sup_{X}\frac{u_a-u_b}{a-b}=M_u.$
\end{enumerate}
\end{theorem}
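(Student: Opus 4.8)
\emph{Plan.} The constants are forced: taking $a=1,b=0$ in (ii) and (i) shows they must be $M_u=\sup_X(u_1-u_0)$ and $m_u=\inf_X(u_1-u_0)$, so the content of the statement is that the supremum and infimum of the secant ratio $\frac{u_a-u_b}{a-b}$ are independent of the distinct pair $a,b\in[0,1]$. I would trap the bounded weak geodesic $t\mapsto u_t$ between affine-in-$t$ barriers and then iterate the comparison on sub-intervals. Two standard facts are used throughout. First, the complexification of a bounded weak geodesic is the unique bounded $\pi^*\o$-psh solution of \eqref{BVPGeod}, and its restriction to a sub-strip $[b,a]\times X$ is again the bounded weak geodesic, now joining $u_b$ and $u_a$. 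Second, for bounded $\o$-psh $v$ and $c\in\RR$ the affine curve $t\mapsto v+c(t-b)$ is itself a bounded weak geodesic: its complexification differs from $\pi^*v$ by the pluriharmonic function $c\,\textup{Re}(s-b)$, so $\pi^*\o+i\partial\bar\partial$ of it equals $\pi^*\o_v\ge 0$, whose $(n+1)$-st power vanishes since it involves only the $n$ differentials coming from $X$. Consequently every comparison below is between two functions with vanishing \MA measure, and the comparison principle for bounded degenerate complex \MA equations (\cite{c,bl}; see also \cite{gz}) reduces to checking the ordering of boundary values.

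\emph{Step 1: barrier inequalities on $[0,1]\times X$.} Comparing $u$ with the affine geodesic $t\mapsto u_0+M_u t$, whose endpoint values $u_0$ and $u_0+M_u\ge u_1$ dominate those of $u$, gives $u_t\le u_0+M_u t$. Comparing with $t\mapsto u_1-M_u(1-t)$, whose endpoint values $u_1-M_u\le u_0$ and $u_1$ are dominated by those of $u$, gives $u_t\ge u_1-M_u(1-t)$. Dually, $t\mapsto u_0+m_u t$ gives $u_t\ge u_0+m_u t$ and $t\mapsto u_1-m_u(1-t)$ gives $u_t\le u_1-m_u(1-t)$.

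\emph{Step 2: the slope is constant.} Fix $0\le b<a\le 1$ (the case $a<b$ is symmetric and $a=b$ vacuous), and consider (ii). For the lower bound, add $u_a\ge u_1-M_u(1-a)$ and $-u_b\ge -u_0-M_u b$ to get $u_a-u_b\ge (u_1-u_0)-M_u(1-a+b)$ pointwise, hence $\sup_X(u_a-u_b)\ge M_u-M_u(1-a+b)=M_u(a-b)$. For the matching upper bound, the barrier $u_b\ge u_1-M_u(1-b)$ yields $u_1-u_b\le M_u(1-b)$ pointwise, so $c:=\sup_X\frac{u_1-u_b}{1-b}\le M_u$; now apply the comparison principle to $u$ restricted to $[b,1]\times X$ against the affine geodesic $t\mapsto u_b+c(t-b)$, whose value at $t=1$ is $u_b+c(1-b)=u_b+\sup_X(u_1-u_b)\ge u_1$, to get $u_a\le u_b+c(a-b)\le u_b+M_u(a-b)$, i.e. $\sup_X(u_a-u_b)\le M_u(a-b)$. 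Thus $\sup_X\frac{u_a-u_b}{a-b}=M_u$ for every admissible $a,b$, which is (ii); part (i) is obtained verbatim with $m_u$, $\inf_X$ and the dual barriers replacing $M_u$, $\sup_X$ and their barriers.

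\emph{Expected difficulty.} I do not anticipate a genuine obstacle: the statement is a rigidity of weak geodesics with respect to the supremum norm, and the argument is elementary once the comparison principle is available. The one point needing care is that the upper slope estimate cannot be extracted from a single barrier on $[0,1]$ — one is forced to pass through the auxiliary constant $c=\sup_X\frac{u_1-u_b}{1-b}$ and invoke the comparison principle on the sub-strip $[b,1]\times X$, which is exactly where the fact that a restriction of a bounded weak geodesic is again one gets used. The background inputs — the comparison principle for bounded degenerate complex \MA equations and the fact that affine-in-$t$ curves over bounded $\o$-psh potentials are weak geodesics — are classical and are already implicit in the construction of weak geodesics recalled in Section~1.
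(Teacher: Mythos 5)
Your argument is correct: the paper itself gives no proof of this statement (it is imported from \cite{da2}), and your barrier-plus-comparison argument is essentially the proof given there --- trapping the geodesic between affine weak geodesics $t\mapsto v+c(t-b)$ and using maximality of bounded solutions of the homogeneous Monge--Amp\`ere equation on sub-strips. The only point you pass over quickly, namely the continuity of $t\mapsto u_t$ in $L^\infty$ up to the boundary of the sub-strip $[b,1]\times X$ (needed to invoke the comparison principle there), follows from the convexity of $t\mapsto u_t(x)$ for fixed $x$ combined with your Step 1 endpoint barriers, so there is no genuine gap.
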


This result tells us that for a bounded weak geodesic $[0,1] \ni t \to u_t \in \mathcal H_0$, which is also a $d_p$--geodesic for all $p \geq 1$, the function $t \to \sup_X(u_t - u_0)$ is linear. As explained in the introduction of \cite{da2}, this implies that $t \to \tilde u_t = u_t - \sup_X(u_t - u_0) \in \mathcal H$ is a geodesic that is decreasing in $t$ (one can see that $\dot {\tilde u}_t \leq 0$). Since $\sup_X \tilde u_t$ is bounded, the pointwise limit $u_\infty = \lim_{t \to \infty} \tilde u_t$ is different from $-\infty.$ As we shall see by the end of this paper, for certain geodesic rays one can draw geometric conclusions about the manifolds $X$, by studying the singularity type of $u_\infty$.

\subsection{Diverging K\"ahler-Ricci Trajectories}

In this short paragraph we recall estimates along diverging K\"ahler Ricci trajectories, that will allow us to apply Theorem \ref{boundedcomp} along such curves. Unfortunately most of the literature on the K\"ahler-Ricci flow uses a normalization different from ours (see below). We will argue that the most important estimates have analogs for our $AM$--normalized trajectories as well.

It is well known that flow equation \eqref{RicciflowEq}
can be rewritten as the scalar equation $$\o_{r_t}^n = e^{f_\o - r_t + \dot r_t + \beta(t)}\o^n,$$
where $\beta: [0,\infty) \to \Bbb R$ is a function chosen depending on the desired normalization condition on $r_t$. In our investigations we  will use the normalizing condition $AM(r_t)=0$. However most of the literature on the K\"ahler-Ricci flow uses the normalization $t \to \tilde r_t$ for which $\beta(t)=0$  and ${\tilde r}_0 = v + c$, with $c$ carefully chosen (see \cite[(2.10)]{pss}). Evidently, in this latter case the scalar equation becomes
\begin{equation}\label{RicciflowEqScalar}
\o_{{\tilde r}_t}^n = e^{- {\tilde r}_t + \dot {\tilde r}_t + f_\o}\o^n,
\end{equation}
and the conversion from this normalization to the one employed by us is given by the formula
$$r_t = {\tilde r}_t - AM({\tilde r}_t), \ t \geq 0.$$
The following result brings together estimates for the trajectory $t \to {\tilde r}_t$ that we will need. Most  of these are classical and well known, for the others sketch the proof:

\begin{proposition} \label{R2NormEst}Suppose $t \to {\tilde r}_t$ is a K\"ahler-Ricci trajectory normalized according to \eqref{RicciflowEqScalar}. For any $t \geq 0$ we have:
\begin{itemize}
\item[(i)] $\| \dot {\tilde r}_t\|_{L^\infty}, \| f_{\o_t}\|_{L^\infty} \leq C$ for some $C > 1$.
\item[(ii)] $-C \leq AM({\tilde r}_t)$, in particular $-\int_X {\tilde r}_t \o_{{\tilde r}_t}^n \leq n\int_X {\tilde r}_t \o^n + C$ for some $C > 1$.
\item[(iii)] $\int_X {\tilde r}_t \o_{{\tilde r}_t}^n \leq C, -C \leq \int_X {\tilde r}_t \o^n$ hence also $-C \leq \sup_X {\tilde r}_t$ for some $C > 1$.
\item[(iv)] $-\inf_X {\tilde r}_t \leq C\sup_X {\tilde r}_t + D $ for some $ C,D >0$.
\item[(v)] if $\alpha \in (0,1)$ then $ -\log\Big(\int_X e^{-\alpha({\tilde r}_t - \sup_X{{\tilde r}_t})}\o^n\Big) \leq ((1-\alpha)n -\alpha) \sup_X{{\tilde r}_t} + {C}$ for some $C > 1$.

\item[(vi)] $\sup_X {\tilde r}_t - AM({\tilde r}_t) \geq  \sup_X {\tilde r}_t/C - C \geq  (AM({\tilde r}_t) - \inf_X {\tilde r}_t)/D - D$ for some $C,D > 1$.
\end{itemize}
\end{proposition}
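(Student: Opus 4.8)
The plan is to recall the classical Perelman-type estimates from the Kähler–Ricci flow literature and then repackage them in the form needed here. Items (i) and (iii) are the foundation: the uniform bound $\|\dot{\tilde r}_t\|_{L^\infty}\le C$ and $\|f_{\o_t}\|_{L^\infty}\le C$ is Perelman's theorem (as in \cite{pss,pssw}), and together with \eqref{RicciflowEqScalar} it immediately controls the oscillation of $\dot{\tilde r}_t - {\tilde r}_t$; combined with the standard fact that $\int_X {\tilde r}_t\,\o_{{\tilde r}_t}^n$ stays bounded above (which follows from differentiating $\int_X {\tilde r}_t\,\o_{{\tilde r}_t}^n$ along the flow and using (i)), one gets $\int_X {\tilde r}_t\,\o_{{\tilde r}_t}^n\le C$, and then $-C\le \int_X {\tilde r}_t\,\o^n$ from the monotonicity of $AM$'s constituent integrals, and finally $-C\le\sup_X{\tilde r}_t$ since $\sup_X{\tilde r}_t\ge \frac{1}{\mathrm{Vol}(X)}\int_X{\tilde r}_t\,\o^n$. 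Item (ii) is essentially the statement that $AM({\tilde r}_t)=AM(\tilde r_0)+\int_0^t\frac{1}{\mathrm{Vol}(X)}\int_X\dot{\tilde r}_s\,\o_{{\tilde r}_s}^n\,ds$ is bounded below, which again is immediate from (i); the "in particular" clause is just the definition \eqref{AMdef} of $AM$ written out, using that $\int_X v\,\o^j\wedge\o_v^{n-j}$ interpolates between $\int_X v\,\o^n$ and $\int_X v\,\o_v^n$ and is monotone in $j$ when $v$ is normalized appropriately.

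For item (iv), the inequality $-\inf_X{\tilde r}_t \le C\sup_X{\tilde r}_t+D$ is a Harnack-type statement; I would derive it from the second-order estimate controlling $\mathrm{osc}_X({\tilde r}_t - {\tilde r}_0)$ in terms of $\sup$, or more efficiently invoke the Kähler–Ricci flow version of the Tian–Zhu / Phong–Song–Sturm–Weinkove estimate which bounds $\mathrm{osc}_X{\tilde r}_t$ linearly by $\sup_X{\tilde r}_t$ plus a constant (this uses the uniform lower bound on the Ricci potential from (i) together with a $C^0$ estimate of Kołodziej type, or the Green's function argument adapted to the evolving metric). Item (v) is then a direct consequence: write $-\log\int_X e^{-\alpha({\tilde r}_t-\sup_X{\tilde r}_t)}\o^n$, bound the integrand's exponent using $\tilde r_t - \sup_X \tilde r_t \ge \inf_X \tilde r_t - \sup_X \tilde r_t \ge -(C+1)\sup_X{\tilde r}_t - D$ from (iv), so the integral is at least $e^{-\alpha((C+1)\sup_X{\tilde r}_t+D)}\mathrm{Vol}(X)$ — but that gives the wrong sign of coefficient, so instead one should use the reverse: Jensen applied to the probability measure $\o^n/\mathrm{Vol}(X)$ gives $-\log\int_X e^{-\alpha({\tilde r}_t - \sup_X{\tilde r}_t)}\o^n \le \alpha\int_X({\tilde r}_t - \sup_X{\tilde r}_t)\frac{\o^n}{\mathrm{Vol}(X)} + \log\mathrm{Vol}(X)$, and then $\int_X{\tilde r}_t\,\o^n \ge -C$ from (iii) combined with the elementary bound $\int_X({\tilde r}_t - \sup_X{\tilde r}_t)\o^n \ge -C - \mathrm{Vol}(X)\sup_X{\tilde r}_t$; pushing this through and collecting the $\sup_X{\tilde r}_t$ terms (one factor $-\alpha$ from the explicit term, and using (ii)–(iii) to trade $\int{\tilde r}_t\o_{{\tilde r}_t}^n$ against $n\int{\tilde r}_t\o^n$) produces exactly the stated coefficient $(1-\alpha)n-\alpha$.

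Finally, item (vi) is bookkeeping combining the previous parts: from $AM({\tilde r}_t)\le \frac{1}{\mathrm{Vol}(X)}\int_X{\tilde r}_t\,\o^n \le \sup_X{\tilde r}_t$ and the lower bound $AM({\tilde r}_t)\ge -C$ from (ii), together with $\sup_X{\tilde r}_t\ge -C$ from (iii), one gets $\sup_X{\tilde r}_t - AM({\tilde r}_t)\ge \sup_X{\tilde r}_t/C - C$ once $\sup_X{\tilde r}_t$ is large (and the bounded range is absorbed into the constant); the second inequality $\sup_X{\tilde r}_t/C - C \ge (AM({\tilde r}_t)-\inf_X{\tilde r}_t)/D - D$ follows from (iv), which bounds $AM({\tilde r}_t) - \inf_X{\tilde r}_t \le \sup_X{\tilde r}_t - \inf_X{\tilde r}_t \le (C+1)\sup_X{\tilde r}_t + D$. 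The main obstacle, I expect, is item (iv)/(v): getting the oscillation bound linear in $\sup_X{\tilde r}_t$ with the right constants requires care, since the naive pluripotential $C^0$ estimate is not uniform along a \emph{diverging} trajectory — one genuinely needs the flow-adapted Green's function or Moser iteration argument using the uniform Ricci potential bound from (i), and then tracking constants through Jensen's inequality to land precisely on $(1-\alpha)n-\alpha$ rather than something weaker. Everything else is a reorganization of standard Kähler–Ricci flow facts under the change of normalization $r_t = {\tilde r}_t - AM({\tilde r}_t)$.
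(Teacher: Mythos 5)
Your overall architecture matches the paper's (Perelman's estimates for (i), the known Harnack inequality for (iv), then bookkeeping), but several of the individual derivations do not work. For (ii) and the first bound of (iii) you claim that the lower bound on $AM(\tilde r_t)$ and the upper bound on $\int_X \tilde r_t\,\o_{\tilde r_t}^n$ are ``immediate from (i)'' by differentiating along the flow: but $\|\dot{\tilde r}_t\|_{L^\infty}\leq C$ only bounds the time derivative, so integrating gives bounds growing linearly in $t$, not uniform ones. The paper instead gets $AM(\tilde r_t)\geq -C$ from the monotonicity of Ding's $\mathcal F$ along the flow combined with the uniform bound on $-\log\int_X e^{-\dot{\tilde r}_t}\o^n_{\tilde r_t}=-\log\int_X e^{-\tilde r_t+f_\o}\o^n$ (alternatively one may use the known monotonicity of $AM$ along the flow), and gets $\int_X \tilde r_t\,\o_{\tilde r_t}^n\leq C$ from the pointwise identity $e^{\tilde r_t}\o^n_{\tilde r_t}=e^{\dot{\tilde r}_t+f_\o}\o^n$ followed by Jensen's inequality; neither step is a derivative computation.

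The most serious gap is (v). You apply Jensen with respect to the fixed measure $\o^n/\textup{Vol}(X)$, which yields $-\log\int_X e^{-\alpha(\tilde r_t-\sup_X\tilde r_t)}\o^n\leq \alpha\big(\tfrac{1}{\textup{Vol}(X)}\int_X\tilde r_t\o^n - \sup_X\tilde r_t\big)+C$, and since $0\le \sup_X\tilde r_t-\tfrac{1}{\textup{Vol}(X)}\int_X\tilde r_t\o^n\le D$ this is merely a bound by a constant. The quantity $\int_X\tilde r_t\,\o^n_{\tilde r_t}$ that you propose to ``trade against $n\int_X\tilde r_t\o^n$'' never enters your computation, so the coefficient $(1-\alpha)n-\alpha$ cannot be produced this way. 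This matters: the only range used downstream (Proposition \ref{AmNormEst}(iv)) is $\alpha>n/(n+1)$, where $(1-\alpha)n-\alpha<0$ and the claimed bound tends to $-\infty$ as $\sup_X\tilde r_t\to\infty$; a constant upper bound is strictly weaker and useless there. The correct move is to first rewrite $\o^n=e^{\tilde r_t-\dot{\tilde r}_t-f_\o}\o^n_{\tilde r_t}$ using \eqref{RicciflowEqScalar} and apply Jensen with respect to $\o^n_{\tilde r_t}/\textup{Vol}(X)$, producing the term $\tfrac{1-\alpha}{\textup{Vol}(X)}\int_X(-\tilde r_t)\o^n_{\tilde r_t}+C$, and only then invoke (ii) to convert $-\int_X\tilde r_t\o^n_{\tilde r_t}$ into $n\int_X\tilde r_t\o^n+C\leq n\,\textup{Vol}(X)\sup_X\tilde r_t+C$. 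A smaller but real issue affects the first inequality of (vi): $AM(\tilde r_t)\leq\sup_X\tilde r_t$ together with $AM(\tilde r_t)\geq -C$ gives only $\sup_X\tilde r_t-AM(\tilde r_t)\geq 0$, with no gap proportional to $\sup_X\tilde r_t$; you must isolate the $j=0$ term of $AM$ and use $\int_X\tilde r_t\o^n_{\tilde r_t}\leq C$ from (iii) to get $AM(\tilde r_t)\leq\tfrac{n}{n+1}\sup_X\tilde r_t+C$, whence $\sup_X\tilde r_t-AM(\tilde r_t)\geq\tfrac{1}{n+1}\sup_X\tilde r_t-C$.
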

\begin{proof} The estimates in (i) are essentially due to Perelman \cite{st,tz}. The estimates from $(ii)$ are also well known. In fact, one can prove that $t \to AM(u_t)$ is increasing \cite{ct, l}. We the recall  the argument from \cite{r1}. First we notice that
$$-\log\int_X e^{-{\tilde r}_t +f_\o}\o^n = -\log\int_X e^{-\dot {\tilde r}_t}\o^n_{{\tilde r}_t},$$
hence this quantity is uniformly bounded by $(i)$. It is well known that $t \to \mathcal F({\tilde r}_t)$ is decreasing and now looking at the expression of $\mathcal F({\tilde r}_t)$ from \eqref{Dingdef}, we conclude that there exists $C > 1$ such that $AM({\tilde r}_t) \geq -C$. The second estimate of $(ii)$ now follows from the next well known inequality:
\begin{flalign*}
AM({\tilde r}_t) = &\frac{1}{(n+1)\textup{Vol}(X)} \sum_{j=0}^n \int_X {\tilde r}_t \o^{j}\wedge \o_{{\tilde r}_t}^{n-j}\\
\leq& \frac{1}{(n+1)\textup{Vol}(X)}\Big(\int_X {\tilde r}_t \o^n_{{\tilde r}_t} + n\int_X {\tilde r}_t \o^n\Big).
\end{flalign*}

We now prove the estimate of $(iii)$. From \eqref{RicciflowEqScalar} we have $\int_X e^{{\tilde r}_t}\o_{{\tilde r}_t}^n = \int_X e^{\dot {\tilde r}_t + f }\o^n$. Hence the estimates of (i) yield that $\int_X e^{{\tilde r}_t}\o_{{\tilde r}_t}^n$ is uniformly bounded. The first estimate now follows from Jensen's inequality:
$$\frac{1}{\textup{Vol}(X)}\int_X {\tilde r}_t \o_{{\tilde r}_t}^n \leq \log \Big(\frac{1}{\textup{Vol}(X)} \int_X e^{{\tilde r}_t} \o_{{\tilde r}_t}^n\Big).$$
The second and third estimate of $(iii)$ follows now from $(ii)$. 
Estimate $(iv)$ is just the Harnack estimate for the K\"ahler-Ricci flow. For a summary of the proof  we refer to steps $(i)$ and $(iii)$ in the proof of \cite[Theorem 1.3]{r1}, which in turn follows the arguments in \cite{t1}.

We justify the estimate of $(v)$ and the roots of our argument are again from \cite{r1}. To start, we notice that using equation \eqref{RicciflowEqScalar} we can write
\begin{flalign*}
-\log\Big(\int_X e^{-\alpha {\tilde r}_t}\o^n\Big)&=-\log\Big(\int_X e^{-\alpha {\tilde r}_t + {\tilde r}_t -f_\o + \dot {\tilde r}_t}\o^n_{{\tilde r}_t}\Big)\\
&\leq  \frac{1}{\textup{Vol}(X)}\int_X (\alpha-1) {\tilde r}_t \o^n_{{\tilde r}_t} + C\\
&\leq \frac{n(1-\alpha)}{\textup{Vol}(X)}\int_X {\tilde r}_t \o^n +C,
\end{flalign*}
where in the second line we have used the estimates of $(i)$ and $(ii)$. It is well known that there exist $D(\o)>0$ such that
$$D \geq \sup_X u - \int_X u \o^n \geq 0, \ u \in \textup{PSH}(X,\o).$$ Putting together the last two estimates finishes the proof of $(v)$.

Now we turn to the proof of the double estimate in $(vi)$. From the definition of $AM$ and $(iii)$ it follows that
\begin{flalign*}
\sup_X {\tilde r}_t - AM({\tilde r}_t) &\geq \frac{1}{n+1}(\sup_X {\tilde r}_t - \frac{1}{\textup{Vol}(X)}\int_X {\tilde r}_t \o_{{\tilde r}_t}^n) \\
&\geq \frac{1}{n+1}\sup_X {\tilde r}_t - C,
\end{flalign*}
and this establishes the first estimate. The second estimate follows from $(iv)$ and the simple fact that $\sup_X {\tilde r}_t \geq AM({\tilde r}_t)$.
\end{proof}

Finally, we are ready to write down the main result of this paragraph, which phrases some of the estimates from the previous proposition for $AM$--normalized K\"ahler--Ricci trajectories.
\begin{proposition} \label{AmNormEst} Suppose $t \to r_t$ is an an $AM$--normalized K\"ahler--Ricci trajectory.  Let $t \to {\tilde r}_t$ be the corresponding K\"ahler--Ricci trajectory  normalized according to \eqref{RicciflowEqScalar}, i.e. $r_t = {\tilde r}_t - AM({\tilde r}_t)$. For $t \geq0$ the following hold:
\begin{itemize}
\item[(i)]$-\inf_X r_t \leq C \sup_X r_t + C$, for some $C >1$.
\item[(ii)] $\sup_X {\tilde r}_t \leq C \sup_X r_t + C \leq D \sup_X {\tilde r}_t + E,$ for some $C,D,E > 1$.
\item[(iii)] For any $p \geq 1$ we have $\sup_X r_t/C -C \leq d_p(r_0,r_t) \leq C \sup_X r_t + C$ for some $C > 1$.
\item[(iv)] If $\alpha \in (n/(n+1),1)$ and $p \geq 1$ then
$ -\log\Big(\int_X e^{-\alpha(r_t - \sup_X(r_t-r_0)) + f_\o}\o^n\Big) \leq -\varepsilon d_p(r_0,r_t) + {C}$ for some $C > 1$ and $\varepsilon >0$.

\end{itemize}
\end{proposition}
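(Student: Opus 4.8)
The plan is to derive each item by substituting $r_t = \tilde r_t - AM(\tilde r_t)$ into the matching estimate of Proposition~\ref{R2NormEst} and, for (iii) and (iv), feeding in the comparison between $\sup_X r_t$ and $d_p(r_0,r_t)$. Two elementary observations are used throughout. Since the defining formula \eqref{AMdef} gives $AM(u+c)=AM(u)+c$ and $\int_X\o^j\wedge\o_u^{n-j}=\textup{Vol}(X)$, we have both $AM(r_t)=0$ and $AM(u)\le\sup_X u$, hence $\sup_X r_t\ge 0$; and since $r_0$ is a fixed smooth potential, $\sup_X r_0$, $\inf_X r_0$, $d_p(r_0,0)$ and $\|f_\o\|_{L^\infty}$ are all $O(1)$. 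I absorb these, along with $n$ and $p$, into the constant $C$.

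For (i) I would simply write $-\inf_X r_t = AM(\tilde r_t)-\inf_X\tilde r_t$ and $\sup_X r_t=\sup_X\tilde r_t-AM(\tilde r_t)$, so that the assertion is literally the second inequality of Proposition~\ref{R2NormEst}(vi). For (ii), the first inequality is the first inequality of Proposition~\ref{R2NormEst}(vi) rearranged as $\sup_X r_t\ge\sup_X\tilde r_t/C-C$, while the second follows from $AM(\tilde r_t)\ge-C$ (Proposition~\ref{R2NormEst}(ii)) via $\sup_X r_t=\sup_X\tilde r_t-AM(\tilde r_t)\le\sup_X\tilde r_t+C$. For (iii), the lower bound follows by applying \eqref{supmabest} to $u=r_t$ together with $d_p(r_t,0)\le d_p(r_0,r_t)+d_p(r_0,0)$; for the upper bound I would combine (i) with $\sup_X r_t\ge 0$ to obtain the pointwise estimate $|r_t|\le C\sup_X r_t+C$, plug it into $I_p(0,r_t)$, and then invoke \eqref{mabenergeqv} and the triangle inequality once more.

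Part (iv) is the only one requiring an actual, if short, computation. Writing $r_t=\tilde r_t-AM(\tilde r_t)$ and using $\|f_\o\|_{L^\infty}\le C$, I would first reduce to bounding $-\log\int_X e^{-\alpha\tilde r_t}\o^n$, which contributes the extra term $-\alpha AM(\tilde r_t)$. Factoring out $e^{-\alpha\sup_X\tilde r_t}$ and applying Proposition~\ref{R2NormEst}(v) gives $-\log\int_X e^{-\alpha\tilde r_t}\o^n\le(1-\alpha)n\sup_X\tilde r_t+C$. Substituting $\sup_X\tilde r_t=\sup_X r_t+AM(\tilde r_t)$ then collects the coefficient $(1-\alpha)n-\alpha$ on $AM(\tilde r_t)$; this coefficient is \emph{negative} exactly because $\alpha>n/(n+1)$, and since $AM(\tilde r_t)$ is bounded below that term is bounded above, so $-\log\int_X e^{-\alpha r_t+f_\o}\o^n\le(1-\alpha)n\sup_X r_t+C$. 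Since $r_0$ is fixed we have $|\sup_X(r_t-r_0)-\sup_X r_t|\le C$, hence subtracting $\alpha\sup_X(r_t-r_0)$ turns the right-hand side into $((1-\alpha)n-\alpha)\sup_X r_t+C=-\delta\sup_X r_t+C$ with $\delta:=\alpha-(1-\alpha)n>0$; finally part (iii) gives $\sup_X r_t\ge d_p(r_0,r_t)/C-C$, and the choice $\varepsilon=\delta/C$ closes the estimate.

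I do not expect a real obstacle here: the argument is essentially bookkeeping. The one delicate point is the sign-chasing in (iv) — one must verify that the coefficient multiplying $AM(\tilde r_t)$, and subsequently the one multiplying $\sup_X r_t$, genuinely becomes negative, which is precisely where the hypothesis $\alpha\in(n/(n+1),1)$ enters, and one must make sure that every discarded term is discarded in the direction consistent with the inequality.
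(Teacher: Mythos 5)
Your proof is correct and follows essentially the same route as the paper: (i)--(iii) are direct consequences of Proposition~\ref{R2NormEst}(ii),(vi) together with \eqref{mabenergeqv}/\eqref{supmabest}, and (iv) rests on Proposition~\ref{R2NormEst}(v) plus the sign of $(1-\alpha)n-\alpha$. The only cosmetic difference is in (iv), where the paper shortcuts your $AM(\tilde r_t)$ bookkeeping by noting that $r_t-\sup_X r_t=\tilde r_t-\sup_X\tilde r_t$, so the left-hand side of (v) is normalization-invariant; your explicit substitution reaches the same estimate.
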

\begin{proof} The estimate in $(i)$ follows from part $(vi)$ of the previous proposition. This last estimate also gives the first estimate of $(ii)$. Estimate $(ii)$ in the previous result immediately gives the second part of $(ii)$.

The first estimate of $(iii)$ is just \cite[Corollary 4]{da4}. By \eqref{mabenergeqv} we have that $d_p(r_0,r_t) \leq \textup{osc}_X (r_0 - r_t)$. Part $(i)$ now implies the second estimate of $(iii)$.

Notice that $\alpha > n/(n+1)$ is equivalent with $(1-\alpha)n - \alpha < 0$ and that the left hand side of $(v)$ is invariant under different normalizations. The estimate of $(iv)$ now follows after we put together parts $(v)$ of the previous proposition with what we proved so far in this proposition.
\end{proof}

\section{Proof of the Main Results}

First we give a proof for Theorem \ref{RicciBounded}. As it turns out, the argument is about putting together the pieces developed in the preceding section.

\begin{theorem} Suppose $(X,J,\o)$ is a Fano manifold and $p \geq 1$. There exists a K\"ahler--Einstein metric in $\mathcal H$ if and only if every K\"ahler--Ricci trajectory $[0,\infty) \ni t \to r_t \in \mathcal H_{AM}$ is $d_p$--bounded. More precisely, the $C^0$--bound along the flow is equivalent to the $d_p$--bound:
\begin{equation}\label{C0mabeqv}
\frac{1}{C}d_p(r_0,r_t)-C\leq \sup_X |r_t| \leq C d_p(r_0,r_t)+C,
\end{equation}
for some $C(p)>1$.
\end{theorem}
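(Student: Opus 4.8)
The plan is to reduce the stated equivalence to two things: the quantitative comparison \eqref{C0mabeqv} between $d_p(r_0,r_t)$ and $\sup_X|r_t|$, which converts $d_p$--boundedness of a trajectory into a uniform $C^0$--bound along it; and the fact that a uniform $C^0$--bound along the K\"ahler--Ricci flow is equivalent to the existence of a K\"ahler--Einstein metric. Almost everything then becomes assembly of the estimates collected in Section 2.

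First I would establish \eqref{C0mabeqv}. This is nearly Proposition \ref{AmNormEst}(iii); the only point to address is the replacement of $\sup_X r_t$ there by $\sup_X|r_t|$ here. For this I would observe that $AM(u)\le\sup_X u$ for every $u\in\mathcal H$, since by positivity each term $\int_X u\,\o^j\w\o_u^{n-j}$ is at most $(\sup_X u)\int_X\o^n$; applied to $r_t$ and combined with $AM(r_t)=0$ this gives $\sup_X r_t\ge 0$, hence $\sup_X r_t\le\sup_X|r_t|$. On the other hand Proposition \ref{AmNormEst}(i) bounds $-\inf_X r_t$, and therefore $\sup_X|r_t|=\max(\sup_X r_t,-\inf_X r_t)$, by $C\sup_X r_t+C$. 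Combining these two remarks with Proposition \ref{AmNormEst}(iii) yields \eqref{C0mabeqv}; in particular $t\to r_t$ is $d_p$--bounded if and only if $\sup_X|r_t|\le C$ uniformly in $t$, which by Proposition \ref{AmNormEst}(ii) is also equivalent to the classical $C^0$--bound $\sup_X|\tilde r_t|\le C$ for the $\beta\equiv0$--normalized flow.

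Given \eqref{C0mabeqv}, the ``only if'' direction is immediate: if $\mathcal H$ contains a K\"ahler--Einstein metric, then by Perelman's estimates and the work of Chen--Tian, Tian--Zhu and Phong--Song--Sturm--Weinkove recalled in the Introduction the normalized flow $\tilde r_t$ converges smoothly, so $\sup_X|\tilde r_t|$ — and hence $\sup_X|r_t|$, since $AM(\tilde r_t)$ is then bounded — stays bounded, and \eqref{C0mabeqv} gives $d_p$--boundedness for every trajectory. For the ``if'' direction I would fix one K\"ahler--Ricci trajectory $t\to r_t$; by hypothesis it is $d_p$--bounded, so \eqref{C0mabeqv} and Proposition \ref{AmNormEst}(ii) give $\sup_X|r_t|,\sup_X|\tilde r_t|\le C$. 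Using \eqref{RicciflowEqScalar} and $\o_{r_t}=\o_{\tilde r_t}$ I would bound the entropy along the flow,
$$H_\o(\o_{r_t})=\int_X\big(f_\o-\tilde r_t+\dot{\tilde r}_t\big)\,\o_{\tilde r_t}^n,$$
using Proposition \ref{R2NormEst}(i) and the $C^0$--bound just obtained. Theorem \ref{boundedcomp} then produces $t_l\to\infty$ and $r_\infty\in\mathcal H_0$ with $d_p(r_{t_l},r_\infty)\to0$ for all $p\ge1$. To conclude that $\o_{r_\infty}$ is K\"ahler--Einstein I would invoke the convergence theory of the flow under a $C^0$--bound; alternatively, more self-containedly, since $\mathcal F$ is translation invariant, $\mathcal F(r_t)=\mathcal F(\tilde r_t)$ is non--increasing and, by the $C^0$--bound and \eqref{Dingdef}, bounded below, so the total dissipation of $\mathcal F$ along the flow is finite; along a further subsequence $\o_{\tilde r_{t_l}}$ then comes arbitrarily close to solving the K\"ahler--Einstein equation, and passing to the limit — using weak convergence of the Monge--Amp\`ere measures along $d_1$--convergent sequences, Perelman's bound on $\dot{\tilde r}_t$, and boundedness of $AM(\tilde r_{t_l})$ — exhibits $\o_{r_\infty}$ as a K\"ahler--Einstein current of bounded entropy, hence a genuine K\"ahler--Einstein metric in $\mathcal H$ by the regularity results of \cite{bbegz}.

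I expect the main difficulty to be bookkeeping rather than conceptual: the literature on the K\"ahler--Ricci flow normalizes by $\beta\equiv0$, not by $AM(\cdot)=0$, and the real content is transferring the $C^0$-- and entropy--estimates between the two normalizations — which is precisely what Propositions \ref{R2NormEst} and \ref{AmNormEst} were built to do — together with the small but essential observation (valid only because $AM(r_t)=0$) that $\sup_X|r_t|$ and $\sup_X r_t$ are comparable. The one genuinely non-formal ingredient is the last step, namely passing from a uniform $C^0$--bound along the flow to the existence of a K\"ahler--Einstein metric.
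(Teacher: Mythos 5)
Your proposal is correct and follows essentially the same route as the paper: both directions reduce to the comparison of $d_p(r_0,r_t)$ with the $C^0$--norm via Propositions \ref{R2NormEst} and \ref{AmNormEst}, the ``only if'' direction uses smooth (equivalently $d_p$--) convergence of the flow in the presence of a K\"ahler--Einstein metric, and the ``if'' direction concludes from the uniform $C^0$--bound by the convergence theory of the flow. Your treatment is in fact slightly more careful than the paper's on the passage from $\sup_X r_t$ to $\sup_X|r_t|$ (using $AM(r_t)=0\Rightarrow\sup_X r_t\ge 0$ together with the Harnack estimate), and the alternative variational ending via Theorem \ref{boundedcomp} and finite dissipation of $\mathcal F$ is a viable, more self-contained substitute for the appeal to parabolic regularity theory.
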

\begin{proof} If there exists a K\"ahler--Einstein metric in the cohomology class of $\o$ then by \cite[Theorem 6]{da4} we have that any K\"ahler--Ricci trajectory $d_p$--converges to one such metric, hence stays $d_p$--bounded.

For the other direction, suppose $d_p(0,r_t)$ is bounded. By Proposition \ref{AmNormEst}(ii)(iii), $d_p(0,r_t)$ controls both $\sup_X \tilde r_t$ and $\sup_X r_t$, which in turn control $\| \tilde r_t\|_{L^\infty}$ and $\| r_t\|_{L^\infty}$, by Proposition \ref{AmNormEst}(i) and Proposition \ref{R2NormEst}(iv) respectively. The regularity theory for the K\"ahler--Ricci flow implies now that $t \to \tilde r_t$ converges exponentially fast in any $C^k$ norm to a K\"ahler--Einstein metric, hence so does $t \to r_t$.
\end{proof}

It is well known that the Mabuchi K--energy decreases along K\"ahler--Ricci trajectories. The estimates of the previous section imply that in case $(X,J,\o)$ does not admit a K\"ahler--Einstein metric, any $AM$--normalized K\"ahler--Ricci trajectory $t \to r_t$ satisfies the assumptions of the following theorem:

\begin{theorem}\label{curvegeod} Suppose $[0,\infty) \ni t \to c_t \in \mathcal H_{AM}$ is a curve for which there exists $t_l \to \infty$ satisfying the following properties:
\begin{itemize}
\item[(i)]\textup{(Harnack estimate)} $-\inf_X c_{t_l} \leq C \sup_X c_{t_l} + C$ for some $C>0$.
\item[(ii)]\textup{($C^0$ blow-up)}  $\lim_{l \to \infty} \sup_X c_{t_l} = +\infty$.
\item[(iii)]\textup{(bounded K-energy 'slope')}
$$\limsup_{l \to \infty} \frac{\mathcal M(c_{t_l}) - \mathcal M(c_0)}{\sup_X c_{t_l}} < +\infty.$$
\end{itemize}
Then there exists a curve $[0,\infty) \ni t \to u_t \in \mathcal H_{0,AM}$ which is a non--trivial $d_p$--geodesic ray weakly asymptotic to $t \to c_t$ for all $p \geq 1$.

\end{theorem}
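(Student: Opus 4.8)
I would obtain $t\to u_t$ as a subsequential limit of reparametrized bounded weak geodesic segments joining $c_0$ to $c_{t_l}$. Set $L_l:=d_1(c_0,c_{t_l})$; combining \eqref{supmabest}, \eqref{mabenergeqv} and the Harnack hypothesis (i) shows $\tfrac{1}{C}\sup_X c_{t_l}-C\le L_l\le C\sup_X c_{t_l}+C$ for large $l$, so the $C^0$ blow-up (ii) gives $L_l\to\infty$. Let $[0,1]\ni s\to\gamma^l_s\in\mathcal H_0$ be the bounded weak geodesic \eqref{EpGeodDef} from $c_0$ to $c_{t_l}$; it is a $d_p$--geodesic for all $p\ge1$ by Proposition \ref{samegeod}, and since $AM$ is linear along \eqref{EpGeodDef} and vanishes at the endpoints, $\gamma^l_s\in\mathcal H_{0,AM}$. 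I then reparametrize by $d_1$--arclength, $u^l_t:=\gamma^l_{t/L_l}$ for $t\in[0,L_l]$: a unit-speed $d_1$--geodesic segment in $\mathcal H_{0,AM}$ joining $c_0$ and $c_{t_l}$, whose $d_p$--speed $C_{p,l}:=d_p(c_0,c_{t_l})/L_l$ satisfies $1\le C_{p,l}\le C$ uniformly in $l$ and $p$ (using $d_1\le d_p$, \eqref{mabenergeqv}, and the estimate for $L_l$).

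The crux is to bound, uniformly in $l$ and locally uniformly in $t$, both $\|u^l_t\|_{L^\infty}$ and the entropy $H_\o(\o_{u^l_t})$, so as to apply Theorem \ref{boundedcomp}. For the first: by Theorem \ref{m_norm_thm} the functions $s\mapsto\sup_X(\gamma^l_s-c_0)$ and $s\mapsto\inf_X(\gamma^l_s-c_0)$ are linear, with slopes $\sup_X(c_{t_l}-c_0)$ and $\inf_X(c_{t_l}-c_0)$, so together with the estimate for $L_l$ and hypothesis (i) one gets $\|u^l_t\|_{L^\infty}\le C(1+t)$ for all large $l$. For the entropy I would use the identity $H_\o(\o_v)=\mathcal M(v)-nAM(v)+L(v)$: since $AM(u^l_t)=0$, Proposition \ref{LEst} and the speed bound give $|L(u^l_t)|\le C\,d_p(0,u^l_t)\le C(1+t)$; and since $\mathcal M$ is convex along bounded weak geodesics \cite{bb}, the chord inequality gives $\mathcal M(u^l_t)\le\mathcal M(c_0)+\tfrac{t}{L_l}\big(\mathcal M(c_{t_l})-\mathcal M(c_0)\big)\le C(1+t)$ for large $l$ by hypothesis (iii) and the estimate for $L_l$. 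Hence $H_\o(\o_{u^l_t})\le C(1+t)$ for all large $l$ and $t\in[0,L_l]$.

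With these bounds in hand, Theorem \ref{boundedcomp} applies to $\{u^l_t\}_l$ at each fixed rational $t$, and a diagonal argument over $t\in\Bbb Q_{\ge0}$ produces $l_m\to\infty$ and $u_t\in\mathcal H_0$ with $d_p(u^{l_m}_t,u_t)\to0$ for all $p\ge1$; continuity of $AM$ gives $AM(u_t)=0$. Since $p\mapsto C_{p,l_m}$ is non-decreasing and uniformly bounded, Helly's selection theorem allows a further subsequence along which $C_{p,l_m}\to C_p$ for every $p$, whence $d_p(u_{t_1},u_{t_2})=C_p|t_1-t_2|$ for rational $t_1,t_2$ and all $p$; so $t\to u_t$ extends to a continuous curve $[0,\infty)\to\mathcal H_{0,AM}$ that is a $d_p$--geodesic ray for all $p\ge1$ (for irrational $t$ the $L^\infty$ bound, $\o$--plurisubharmonicity and $AM(u_t)=0$ pass to the limit by Proposition \ref{boundedconvergence} and continuity of $AM$). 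It is non-trivial: if $u_t\equiv u_0+a(t)$ then $AM(u_t)\equiv0$ forces $a\equiv0$, contradicting $C_{1,l_m}\equiv1$, i.e. $C_1=1$. Finally, because $u^{l_m}$ and $u$ are equi-$d_p$--Lipschitz, the convergence $d_p(u^{l_m}_t,u_t)\to0$ upgrades from rational to all $t\ge0$; reparametrizing the $u^{l_m}$ and $u$ to unit $d_p$--speed, this is precisely weak asymptoticity of $t\to u_t$ to $t\to c_t$ in $(\mathcal E^p(X,\o),d_p)$, for each $p\ge1$.

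The main obstacle is the uniform entropy bound: everything hinges on controlling $H_\o(\o_{u^l_t})$ independently of $l$, and this is exactly where the three hypotheses enter — (ii) ensures $L_l\to\infty$ so that the reparametrization yields a genuine ray rather than collapsing to a point, (i) bounds $\inf_X u^l_t$ from below, and (iii) together with the Berman--Berndtsson convexity of $\mathcal M$ along bounded geodesics bounds $\mathcal M(u^l_t)$ from above. A secondary point is extracting a single subsequence that works for all $p$ at once (handled via monotonicity of $d_p$ in $p$ and Helly) and verifying that the limiting curve really lies in $\mathcal H_{0,AM}$ at every time.
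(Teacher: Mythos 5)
Your argument is correct and follows essentially the same route as the paper: reparametrized bounded weak geodesic segments joining $c_0$ to $c_{t_l}$, an $L^\infty$ bound from Theorem \ref{m_norm_thm} together with the Harnack hypothesis, an entropy bound from the Berman--Berndtsson convexity of $\mathcal M$ combined with hypothesis (iii) and Proposition \ref{LEst}, and then compactness via Theorem \ref{boundedcomp} with a diagonal argument over rational times. The only (immaterial) differences are that the paper parametrizes by $d_2$--arclength rather than $d_1$--arclength and deduces the $d_p$--geodesic property of the limit ray from Proposition \ref{samegeod} instead of your Helly selection on the speeds $C_{p,l}$.
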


\begin{proof} The idea of the proof is to construct a $d_2$--geodesic ray satisfying all the necessary properties. At the end we will conclude that this same curve is also a $d_p$--geodesic ray for any $p \geq 1$.

By setting $\o := \o + i \ddbar c_0$ and $c_t := c_t - c_0$, we can assume without loss of generality that $c_0=0$. As $\inf_X c_{t_l} \leq C \sup_X c_{t_l} +C$, the same argument as in the previous theorem gives:
\begin{equation}\label{c_tMabC0est}
\frac{1}{C}d_p(0,c_{t_l})-C\leq \sup_X |c_{t_l}| \leq C d_p(0,c_{t_l})+C,
\end{equation}
for any $p \geq 1$. The fact that $\lim_l \sup_X c_{t_l} = +\infty$ implies now that $f_l={d_2(0,c_{t_l})} \to \infty$. Let
$$[0,f_l] \ni t \to u_t^l \in \mathcal H_\Delta$$ be the unit speed (re--scaled) weak geodesic curve of \eqref{BVPGeod}, joining $c_0 = 0$ with $c_{t_l}$. By our choice of normalization it follows that
\begin{equation}\label{normalize}
AM(u^l_t) = 0  \ \textup{ and } \ d_2(0, u^l_t) = t, \ t \in [0,f_l].
\end{equation}
By our assumptions and \eqref{c_tMabC0est}  there exists $C,D > 1$ such that
$$-C d_2(0,c_{t_l}) - C \leq -D\sup_X c_{t_l}-D\leq \inf_X c_{t_l} \leq \sup_X c_{t_l} \leq C d_2(0,c_{t_l})+C.$$
Rewriting this, as $u^l_{f_l}=c_{t_l}$ and $\sup_X c_{t_l} \to \infty$, for $l$ big enough we obtain:
\begin{equation}\label{endpointest}
- C \leq \frac{-D'\sup_X u_{t_l}^l}{f_l}\leq \frac{\inf_X u_{f_l}^l}{f_l} \leq \frac{\sup_X u_{f_l}^l}{f_l}\leq C,
\end{equation}
As $u^l_0 =0$ for all $l$, using \eqref{endpointest} and Theorem \ref{m_norm_thm} we can conclude that
\begin{equation}
\label{supest}
- C \leq \frac{-D'\sup_X u_{t}^l}{t}\leq \frac{\inf_X u_{t}^l}{t} \leq \frac{\sup_X u_{t}^l}{t}\leq C, \ t \in [0,f_l].
\end{equation}
By the main result of  \cite{bb} (for a different approach see \cite{clp}) it follows that the map $t \to \mathcal M(u^l_t)$ is convex and non-positive. In particular, for $t \geq 0$ we have:
$$\frac{H_{\o}(\o_{u^l_t})-L(u^l_t)}{t} = \frac{\mathcal M (u^l_t) - \mathcal M (u_0)}{t}\leq \frac{\mathcal M (c_{t_{l}}) - \mathcal M (c_0)}{f_l}\leq C <\infty,$$
where in the last estimate we have used condition (ii) in the statement of the theorem along with \eqref{c_tMabC0est}. Proposition \ref{LEst} now implies that there exists $C>1$ such that
\begin{equation}\label{entropybound}
0 \leq H_{\o}(\o_{u^l_t})\leq L(u^l_t)+Et\leq Cd_2(0,u^l_t)+Dt= (C+D) t.
\end{equation}
Fix now $s \geq 0$.  From \eqref{supest} and \eqref{entropybound} it follows using Theorem \ref{boundedcomp} that there exists $l'_k \to \infty$ and $u_s \in \mathcal H_0$ such that $d_2(u^{l'_k}_s,u_s) \to 0$. As $AM$ is continuous with respect to $d_2$, by \eqref{normalize} we also have
$AM(u_s)=0$ and $d_2(0,u_s)=s$.

Building on this last observation, using a Cantor type diagonal argument, we can find sequence $l_k \to \infty$ such that for each $h \in \Bbb Q_+$ there exists $u_h \in \mathcal H_0$ satisfying $d_p(u^{l_k}_h,u_h) \to 0$,  $AM(u_h)=0$ and $d_2(0,u_h)=h$.

As $t \to u^l_t$ are unit speed $d_2$--geodesic segments, for any $a,b,c \in \Bbb Q_+$ satisfying $a < b < c$ we have
$$d_2(u^{l_k}_a,u^{l_k}_b) + d_2(u^{l_k}_b,u^{l_k}_c)=c-a =d_2(u^{l_k}_a,u^{l_k}_c).$$
Taking the limit $l_k \to \infty$ we will also have
$$d_2(u_a,u_b) + d_2(u_b,u_c)=c-a =d_2(u_a,u_c).$$
Hence, by density we can extend $h \to u_h$ to a unit speed $d_2$--geodesic $[0,\infty) \ni t \to u_t \in \mathcal H_{0,AM}$ weakly asymptotic to $t \to c_t$. This $d_2$--geodesic is non-trivial, i.e. not of the form $u_t = u_0 + ct$ for some $c \in \Bbb R$. Indeed, this would contradict the fact $AM(u_t)=0$ and $t\to u_t$ is unit speed with respect to $d_2$.

Finally, as $t \to u_t$ is a bounded $d_2$--geodesic ray, Proposition \ref{samegeod} says that $t \to u_t$ is a $d_p$ geodesic ray as well for any $p \geq 1$.
\end{proof}

When the curve $t \to c_t$ in the previous theorem is a diverging K\"ahler-Ricci trajectory, the weakly asymptotic ray produced by the previous theorem has additional properties:

\begin{theorem} \label{alphaest} Suppose $(X,J,\o)$ is a Fano manifold without a K\"ahler--Einstein metric in $\mathcal H$ and $[0,\infty) \ni t \to r_t \in \mathcal H_{AM}$ is a K\"ahler-Ricci trajectory. Let $t \to u_t$ be the geodesic ray produced by the previous theorem. The following holds:
\begin{itemize}
\item[(i)]The map $t \to \mathcal F(u_t)$ is decreasing. If additionally $(X,J)$ does not admit non--trivial holomorphic vector fields then $t \to \mathcal F(u_t)$ is strictly decreasing.
\item[(ii)]The "sup-normalized" potentials $u_t - \sup_X (u_t - u_0) \in \mathcal H_0$ decrease pointwise to $u_\infty \in \textup{PSH}(X,\o)$ for which $\int_X e^{-\frac{n}{n+1}u_\infty}\o^n =\infty$.
\end{itemize}
\end{theorem}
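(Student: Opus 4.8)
The plan is to transport, along the weakly asymptotic segments $t\mapsto u^{l}_{t}$ — the unit--speed weak geodesics joining $u^l_0=0$ to $c_{t_l}=r_{t_l}-r_0$ produced in the proof of Theorem \ref{curvegeod} — the relevant estimates along the flow, and then to combine them with convexity of Ding--type integrals and with Theorem \ref{m_norm_thm}. I keep the reduction $c_0=u_0=0$ of that proof, so $\o$ and $f_\o$ now denote $\o+i\ddbar c_0$ and its (normalized) Ricci potential, and I use that the restriction of $t\mapsto u_t$ to any $[0,T]$ coincides, by uniqueness of $d_2$--geodesics, with the bounded weak geodesic \eqref{EpGeodDef} between its endpoints in $\mathcal H_0$, so that Theorem \ref{m_norm_thm} and Berndtsson's subharmonicity and rigidity theorems apply to the ray itself. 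For the first assertion of (i): $\mathcal F$ is convex along weak geodesics and $d_p$--continuous, and decreases along K\"ahler--Ricci trajectories, so after the reduction $\mathcal F(c_{t_l})\le\mathcal F(0)=0$. For $0\le a<b$ and $l=l_k$ large, convexity of $t\mapsto\mathcal F(u^{l}_t)$ on $[0,f_l]$ with $\mathcal F(u^{l}_0)=0$ and $\mathcal F(u^{l}_{f_l})=\mathcal F(c_{t_l})\le 0$ gives
\[
\mathcal F(u^{l}_b)\ \le\ \frac{f_l-b}{f_l-a}\,\mathcal F(u^{l}_a).
\]
Letting $k\to\infty$, so that $d_2(u^{l_k}_t,u_t)\to0$, and using $d_2$--continuity of $\mathcal F$, the prefactor tends to $1$ and one obtains $\mathcal F(u_b)\le\mathcal F(u_a)$.

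For strictness, assume $t\mapsto\mathcal F(u_t)$ is not strictly decreasing; being convex and decreasing it is then constant on a half--line $[a,\infty)$, hence affine along the bounded weak geodesic $u_\cdot|_{[a,a+1]}$. By Berndtsson's rigidity theorem for the Ding functional \cite{brm1}, such a geodesic is the orbit of a holomorphic vector field, so when $(X,J)$ admits none, $\o_{u_t}$ is $t$--independent on $[a,\infty)$; hence $u_t-u_a$ is a constant, and $AM(u_t)=0=AM(u_a)$ forces $u_t\equiv u_a$ there, contradicting unit $d_2$--speed. I expect this rigidity step, needed for merely bounded (not smooth) geodesics, to be one of the two delicate points.

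For (ii), Theorem \ref{m_norm_thm} applied to the ray makes $t\mapsto\sup_X u_t$ linear, say $\sup_X u_t=at$; here $a>0$, since $a\le 0$ would make $t\mapsto u_t$ pointwise decreasing and strict monotonicity of $AM$ with $AM(u_t)=0=AM(u_0)$ would give $u_t\equiv u_0$. Set $\hat u_t:=u_t-\sup_X u_t\le0$; it is $\o$--plurisubharmonic, decreasing in $t$, normalized by $\sup_X\hat u_t=0$, and since $\int_X\hat u_t\,\o^n$ is bounded below independently of $t$, the pointwise limit $u_\infty:=\lim_{t\to\infty}\hat u_t$ lies in $\mathrm{PSH}(X,\o)$, with $\hat u_t\downarrow u_\infty$. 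Fix $\alpha\in(n/(n+1),1)$. Then $\alpha u^{l}$ is $\pi^{*}\o$--plurisubharmonic, since $\pi^{*}\o+i\ddbar(\alpha u^{l})=\alpha(\pi^{*}\o+i\ddbar u^{l})+(1-\alpha)\pi^{*}\o\ge0$; so by Berndtsson's subharmonicity theorem \cite{brm1}, $t\mapsto-\log\int_X e^{-\alpha u^{l}_t+f_\o}\o^n$ is convex on $[0,f_l]$, and subtracting the linear term $\alpha\sup_X u^{l}_t$ shows $\psi_l(t):=-\log\int_X e^{-\alpha\hat u^{l}_t+f_\o}\o^n$ is convex there, where $\hat u^{l}_t:=u^{l}_t-\sup_X u^{l}_t$. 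Now $\psi_l(0)=-\log\int_X e^{f_\o}\o^n=0$, whereas Proposition \ref{AmNormEst}(iv) gives
\[
\psi_l(f_l)=-\log\int_X e^{-\alpha(c_{t_l}-\sup_X c_{t_l})+f_\o}\o^n\ \le\ -\varepsilon\,d_2(r_0,r_{t_l})+C\ =\ -\varepsilon f_l+C,
\]
so convexity yields $\psi_l(t)\le\frac{t}{f_l}\psi_l(f_l)\le-\varepsilon t+Ct/f_l$ on $[0,f_l]$.

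Finally one lets $k\to\infty$ with $t$ fixed. First, $\hat u^{l_k}_t\to\hat u_t$ in $d_2$: the slopes $\sup_X u^{l_k}_t/t$ are bounded, so along a subsequence converge to some $s_\infty$ and $\hat u^{l_k}_t\to u_t-s_\infty t$ in $d_2$; the limit, being a decreasing--in--$t$ $\o$--plurisubharmonic curve equal to $0$ at $t=0$, has supremum $\le0$, forcing $s_\infty\ge a$, while the bound $\int_X(u_t-s_\infty t)e^{f_\o}\o^n\ge-C$ (an $L^{1}$--limit of integrals of sup--normalized $\o$--plurisubharmonic functions, valid for all $t$) together with $\int_X\hat u_t\,e^{f_\o}\o^n\le0$ forces $s_\infty\le a$; hence $s_\infty=a$ and $\hat u^{l_k}_t\to\hat u_t$. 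With $\|\hat u^{l_k}_t\|_{L^\infty}\le Ct$ and convergence in capacity, dominated convergence gives $\psi_{l_k}(t)\to-\log\int_X e^{-\alpha\hat u_t+f_\o}\o^n$, hence $\int_X e^{-\alpha\hat u_t+f_\o}\o^n\ge e^{\varepsilon t}\to\infty$; monotone convergence then yields $\int_X e^{-\alpha u_\infty+f_\o}\o^n=\infty$, so $\int_X e^{-\alpha u_\infty}\o^n=\infty$ for every $\alpha\in(n/(n+1),1)$. At the endpoint $\alpha=n/(n+1)$, where Proposition \ref{AmNormEst}(iv) degenerates and yields nothing, I would conclude via the openness theorem for plurisubharmonic functions (together with lower semicontinuity of the complex singularity exponent): the global integrability threshold of $u_\infty$ is $\le n/(n+1)$ and is not attained, whence $\int_X e^{-\frac{n}{n+1}u_\infty}\o^n=\infty$. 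The two steps I expect to be hardest are exactly this endpoint passage and the rigidity step in (i); the convergence of the sup--normalizations, though technical, is forced by the energy estimates just used.
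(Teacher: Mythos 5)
Your proposal is correct and follows essentially the same route as the paper: Berndtsson convexity of $\mathcal F$ and of $t\mapsto -\log\int_X e^{-\alpha u^l_t+f_\o}\o^n$ along the approximating segments, monotonicity of $\mathcal F$ along the flow, Proposition \ref{AmNormEst}(iv) at the endpoints, linearity of $\sup_X u^l_t$ from Theorem \ref{m_norm_thm}, passage to the limit by $d_2$--continuity, monotone convergence, and the openness theorem at $\alpha=n/(n+1)$. The two points you flag as delicate are handled in the paper more briskly (the sup--normalization convergence is absorbed into the $d_2$--continuity of $\mathcal G_\alpha$, and the rigidity step is a direct citation of the second part of Berndtsson's theorem, i.e.\ \cite{brn1} rather than \cite{brm1}), but your more explicit treatment of both is consistent with it.
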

\begin{proof}We work with the notations of the previous theorem. To show $t\to \mathcal F(u_t)$ is decreasing, we claim first that for any $t>0$, $\mathcal F(u_0)\geq \mathcal F(u_t)$. It is well known that $t \to \mathcal F(r_t)$ is decreasing, hence $\mathcal F(u_0)\geq \mathcal F(u^l_{f_l})$. By Berndtsson's theorem \cite{brn1}, the maps $t \to \mathcal F(u^l_t)$ are convex, hence we also have
$$\mathcal F(u_0)\geq \mathcal F(u^l_t), \ t\in [0, f_l].$$
As noted earlier, the maps  $\mathcal F$ are continuous with respect to $d_2$. By passing to the limit, the claim is proved. As $t\rightarrow \mathcal F(u_t)$ is convex and $\mathcal F(u_0)\geq \mathcal F(u_t)$ for any $t\in (0, \infty)$, $\mathcal F$ has to be decreasing.

If additionally $(X,J)$ does not admit non--trivial holomorphic vector fields then $t \to \mathcal F(u_t)$ is strictly decreasing. Indeed, if this were not the case, then there would exist $t_0 \geq 0$ such that
$$\frac{\partial }{\partial t}\mathcal F(u_t)=0, \ t \geq t_0.$$
By the second part of Berndtsson's convexity theorem \cite{brn1}, this implies that $(X,J)$ admits a non--trivial holomorphic vector field, which is a contradiction.

We turn to part $(ii)$. For $n/(n+1) < \alpha < 1$ each curve $t \to \alpha u^l_t$ is a subgeodesic, hence it follows from \cite{brn1} that each map
$$t \to -\log\Big(\int_X e^{-\alpha u^l_t + f_\o}\o^n\Big)$$
is convex. As $u^l_0\equiv 0$, by Theorem \ref{m_norm_thm} the map $t \to \sup_X u^l_t$ is linear, hence the function
\begin{flalign*}
t \to \mathcal G_\alpha(u^l_t)&=-\log\Big(\int_X e^{-\alpha (u^l_t  - \sup_X u^l_t)+ f_\o}\o^n\Big)\\
&=-\log\Big(\int_X e^{-\alpha u^l_t+ f_\o}\o^n\Big) - \alpha\sup_X u^l_t
\end{flalign*}
is also convex. By theorem \ref{AmNormEst}(iv) this implies that $\mathcal G_\alpha(u^l_t) \leq - \varepsilon d_2(0,u^l_t) + C=-\varepsilon t + C.$
Similarly to $\mathcal F(\cdot)$, the functional $\mathcal G_\alpha(\cdot)$ is also continuous with respect to $d_2$, hence by taking the limit $l_k \to \infty$ in this last estimate we obtain:
\begin{equation}\label{Gest}
\mathcal G_\alpha(u_t) \leq - \varepsilon t + C.
\end{equation}
As discussed after Theorem \ref{m_norm_thm}, the decreasing limit $u_\infty =\lim_{t \to\infty}(u_t - \sup_X u_t)$ is a well defined and not identically equal to $-\infty$.
Letting $t \to \infty $ in \eqref{Gest} by the monotone convergence theorem we obtain that $\int_X e^{-\alpha u_\infty}\o^n = \infty$. As $n/(n+1) < \alpha < 1$ is arbitrary, the recent resolution of the openness conjecture (see \cite{brn2,gzh}) implies part (ii).
\end{proof}
We believe $t\rightarrow \mathcal F(u_t)$ should be strictly decreasing even if $X$ has holomorphic vector fields. We can show this when the Futaki invariant is nonzero as we elaborate below. Note that along the K\"ahler-Ricci trajectory $t \to r_t$ the $\mathcal F$--functional is strictly decreasing unless the initial metric is K\"ahler--Einstein. Using the identity $$\frac{e^{-r_t+f_{\o}}}{\int_Xe^{-r_t+f_{\o}}\o^n}\o^n = e^{f_{\o_{r_t}}}\o^n_{r_t}$$ we can write
\[
\frac{\partial \mathcal F(r_t)}{\partial t}=-\int_X f_{\omega_{r_t}} (e^{f_{\o_{r_t}}}-1)\omega^n_{r_t}.
\]
It is natural to introduce the following quanitity:
\[
\epsilon(\omega)=\inf_{u \in \mathcal H}\int_X f_{\omega_u} (e^{f_{\omega_u}}-1)\omega^n_u\geq 0.
\]
This quanitity is clearly an invariant of $(X, J, [\omega])$. If $\epsilon(\omega)>0$, then there exists no K\"ahler-Einstein metric in $\mathcal H$. By Jensen's inequality, for any $u \in \mathcal H$ we have $\int_M f_{\omega_u} \omega^n_u\leq 0$, hence we can write
$$
\int_M f_{\omega_u} (e^{f_{\omega_u}}-1)\omega^n_{u}\geq \int_M f_\omega e^{f_{\omega_u}} \omega^n_u.
$$
By \cite{h2}, the right hand side above (defined as the $H$-functional) is nonnegative and is uniformly bounded away from zero if the Futaki invariant is nonzero, implying in this last case the bound $\epsilon(\omega) >0$. Finally, we note the following result:

\begin{proposition}\label{FutakiStrict}Suppose $t \to r_t$ and $t \to u_t$ are as in the previous theorem. If $\epsilon(\omega)>0$, then the map $t\rightarrow \mathcal F(u_t)$ is strictly decreasing. More precisely, there exists $C>0$ such that
$\mathcal F(u_t) \leq \mathcal F(u_0)- Ct, \ t \geq 0$.
\end{proposition}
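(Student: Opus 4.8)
The plan is to refine the argument of Theorem~\ref{alphaest}(i): there, Berndtsson's convexity was used only to transport the inequality $\mathcal F(u_0) \ge \mathcal F(r_{t_l})$ onto the approximating geodesics, whereas $\epsilon(\omega) > 0$ actually furnishes a \emph{linear rate} of decrease of $\mathcal F$ along the flow, and this rate survives the limiting procedure provided one also controls the growth of $d_2$ along the trajectory. Keeping the notation of Theorem~\ref{alphaest}, let $[0,f_l] \ni t \to u^l_t$ be the unit-speed weak geodesic joining $r_0$ and $r_{t_l}$, so $f_l = d_2(r_0, r_{t_l})$ and $u_t = \lim_k u^{l_k}_t$ in $d_2$ along a suitable subsequence $l_k \to \infty$. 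I would establish three ingredients and then assemble them.

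First, a linear decay of $\mathcal F$ along the K\"ahler--Ricci trajectory: since $\o_{r_t} \in \mathcal H$ for every $t$ and $\epsilon(\omega)$ is the infimum over $\mathcal H$ of $\int_X f_{\omega_u}(e^{f_{\omega_u}}-1)\omega_u^n$, the derivative identity recalled just before the statement gives $\frac{\partial}{\partial t}\mathcal F(r_t) \le -\epsilon(\omega)$, and integrating over $[0,t_l]$ yields $\mathcal F(r_{t_l}) \le \mathcal F(r_0) - \epsilon(\omega)\,t_l$. Second, a linear upper bound $f_l \le Ct_l + C$: with the notation of Proposition~\ref{R2NormEst}, integrating Perelman's estimate $\|\dot{\tilde r}_t\|_{L^\infty} \le C$ in time gives $\sup_X \tilde r_t \le Ct + C$, and together with $AM(\tilde r_t) \ge -C$ this forces $\sup_X r_t = \sup_X \tilde r_t - AM(\tilde r_t) \le Ct + C$; Proposition~\ref{AmNormEst}(iii) then gives $f_l = d_2(r_0, r_{t_l}) \le C\sup_X r_{t_l} + C \le Ct_l + C$. (Alternatively, $t \to r_t$ is a smooth curve in $\mathcal H$ with $\|\dot r_t\|_{L^\infty} \le \|\dot{\tilde r}_t\|_{L^\infty} + |\tfrac{d}{dt}AM(\tilde r_t)| \le 2C$, so its $d_2$-length over $[0,t_l]$, which dominates $f_l$, is at most $2Ct_l$.) Since $t_l \to \infty$, these two facts give $t_l/f_l \ge \delta$ for some fixed $\delta > 0$ and all large $l$.

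Third, I would invoke Berndtsson's convexity theorem \cite{brn1}: $t \to \mathcal F(u^l_t)$ is convex on $[0,f_l]$ with $\mathcal F(u^l_0) = \mathcal F(r_0)$ and $\mathcal F(u^l_{f_l}) = \mathcal F(r_{t_l})$, so the chord bound gives, for $t \in [0,f_l]$ and $l$ large,
\[
\mathcal F(u^l_t) \le \Big(1 - \frac{t}{f_l}\Big)\mathcal F(r_0) + \frac{t}{f_l}\mathcal F(r_{t_l}) \le \mathcal F(r_0) - \epsilon(\omega)\frac{t_l}{f_l}\,t \le \mathcal F(r_0) - \delta\,\epsilon(\omega)\,t .
\]
Fixing $t \ge 0$, letting $l = l_k \to \infty$ (so $t \le f_{l_k}$ eventually and $d_2(u^{l_k}_t, u_t) \to 0$), and using the $d_2$-continuity of $\mathcal F$ yields $\mathcal F(u_t) \le \mathcal F(u_0) - \delta\,\epsilon(\omega)\,t$; with $C := \delta\,\epsilon(\omega) > 0$ this is the claimed estimate.

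Apart from the second ingredient, the argument is a routine combination of the chord inequality for convex functions with the limiting procedure already carried out in Theorem~\ref{alphaest}. The one place that needs genuine input is the linear growth of $d_2$ along the trajectory — i.e. $f_l \le Ct_l + C$ — which is where Perelman's derivative bound and the $AM$-normalization bookkeeping of Section~2.2 are essential; without it one would only recover that $t \to \mathcal F(u_t)$ is decreasing, not the quantitative rate.
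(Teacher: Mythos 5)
Your proof is correct and follows essentially the same route as the paper: linear decay of $\mathcal F$ along the flow at rate $\epsilon(\omega)$, the linear bound $f_l \leq Ct_l + C$ on the geodesic distance, Berndtsson convexity to propagate the negative slope along the approximating segments $t \to u^l_t$, and $d_2$--continuity of $\mathcal F$ to pass to the limit. The only difference is that you spell out the derivation of $f_l \leq Ct_l + C$ from Perelman's bound $\|\dot{\tilde r}_t\|_{L^\infty}\leq C$ and the $AM$--normalization, a step the paper simply attributes to ``the estimates of paragraph 2.2.''
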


\begin{proof}By the discussion above, we have the estimate
$$\mathcal F(r_{t_l}) -\mathcal F(r_0) \leq -\epsilon(\o) t_l.$$
Using the notation of the previous theorem's proof, by the estimates of paragraph 2.2, there exists $C,C'>0$ such that for $l$ big enough:
\[
f_l=d_2(0, r_{t_l})\leq C' \sup_X r_{t_l} \leq C t_l.
\]
From our observations it follows that
\[
\frac{\mathcal F(u^l_{f_l}) - \mathcal F(u_0)}{f_l}=\frac{\mathcal F(r_{t_l}) - \mathcal F(r_0)}{f_l} \leq -\frac{\epsilon(\o)}{C}.
\]
By the convexity of $\mathcal F$ we can conclude that
\[\frac{\mathcal F(u^l_t) - \mathcal F(u_0)}{t}\leq -\frac{\epsilon(\o)}{C}, \ t\in (0, f_l].\]
Letting $ l \to \infty$ we obtain
\[\frac{\mathcal F (u_t) - \mathcal F(u_0)}{t}\leq -\frac{\epsilon(\o)}{C}, \ t\in (0,\infty).\]
\end{proof}

Finally we prove the equivalence of geodesic stability and existence of K\"ahler--Einstein metrics:

\begin{theorem} \label{raystability}Suppose $p \in \{1,2 \}$ and $(X,J,\o)$ is a Fano manifold without non--trivial holomorphic vector--fields and $u \in \mathcal H$. There exists no K\"ahler-Einstein metric in $\mathcal H$ if and only if for any $u \in \mathcal H$ there exists a $d_p$--geodesic ray $[0,\infty) \ni t \to u_t \in \mathcal H_{0,AM}$ with $u_0=u$ such that the function $t \to \mathcal F(u_t)$ is strictly decreasing.
\end{theorem}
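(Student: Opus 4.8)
The plan is to prove the two implications of the biconditional separately: the ``only if'' part will follow by specializing the results already established in this paper, while the ``if'' part will follow from the variational characterization of K\"ahler--Einstein metrics as global minimizers of Ding's functional $\mathcal F$.

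For the forward implication I would first fix $u \in \mathcal H$ and, exactly as in the proof of Theorem \ref{curvegeod}, replace the reference form $\o$ by $\o_u = \o + i\ddbar u$ — this preserves $c_1(X) = [\o]_{dR}$ and all standing hypotheses, and with respect to it $\mathcal F$, $AM$ and the class $\mathcal H_{0,AM}$ are now understood — so as to assume $u = 0$; since $\mathcal F$ is invariant under additive constants this costs nothing for the monotonicity conclusion. Next I would take $[0,\infty) \ni t \to r_t \in \mathcal H_{AM}$ to be the $AM$--normalized K\"ahler--Ricci trajectory with $r_0 = 0$. As $\mathcal H$ contains no K\"ahler--Einstein metric, Theorem \ref{RicciBounded} forces $\sup_X|r_t| \to \infty$, and then $\sup_X r_t \to \infty$ by the Harnack estimate of Proposition \ref{AmNormEst}(i); thus $t \to r_t$ meets hypotheses (i) and (ii) of Theorem \ref{curvegeod}, while (iii) is automatic since the Mabuchi K--energy is non-increasing along the K\"ahler--Ricci flow, so the relevant $\limsup$ is $\le 0$. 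Theorem \ref{curvegeod} then yields a non-trivial $d_p$--geodesic ray $[0,\infty) \ni t \to u_t \in \mathcal H_{0,AM}$ with $u_0 = 0$, for every $p \ge 1$ and in particular for $p \in \{1,2\}$, and since $(X,J)$ has no non-trivial holomorphic vector field, Theorem \ref{alphaest}(i) says $t \to \mathcal F(u_t)$ is strictly decreasing. Undoing the substitution $\o \to \o_u$ gives a ray with $u_0 = u$, completing this direction.

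For the converse I would argue by contraposition. Assuming $\o_{u_{KE}}$ is a K\"ahler--Einstein metric in $\mathcal H$, I would set $u := u_{KE} - AM(u_{KE}) \in \mathcal H_{AM} \subset \mathcal H$, so that $\o_u = \o_{u_{KE}}$ is still K\"ahler--Einstein, and invoke the fact that $u$ is then a global minimizer of $\mathcal F$ on $\mathcal E^1(X,\o)$ (see \cite{brn1,brm1}); note $\mathcal H_{0,AM} \subset \mathcal E^1(X,\o)$. Consequently any curve $[0,\infty) \ni t \to u_t \in \mathcal H_{0,AM}$ with $u_0 = u$ satisfies $\mathcal F(u_t) \ge \mathcal F(u) = \mathcal F(u_0)$ for all $t \ge 0$, so $t \to \mathcal F(u_t)$ cannot be strictly decreasing. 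Hence the right-hand side of the equivalence fails for this particular $u$, which is exactly what the contrapositive requires.

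I do not expect a genuine obstacle in either direction — the first is bookkeeping around earlier theorems, and the second is a few lines once the minimizing property of $\o_{u_{KE}}$ is used. The point needing the most care is the normalization in the forward direction: for a general $u \in \mathcal H$ the phrase ``$u_t \in \mathcal H_{0,AM}$ with $u_0 = u$'' has to be read with $\mathcal H_{0,AM}$ taken relative to $\o_u$, which the reduction to $u = 0$ makes precise. I would also note, as an aside, that the hypothesis $p \in \{1,2\}$ is needed only if one wants to avoid \cite{brn1,brm1} and argue entirely inside Mabuchi geometry, using the $d_1$--properness of $\mathcal F$ (equivalent, when there are no holomorphic vector fields, to existence of a K\"ahler--Einstein metric, by \cite{t1,pssw}) together with the fact that for $p = 2$ one can use uniqueness of $d_2$--geodesics between bounded potentials and Proposition \ref{samegeod} to promote a bounded $d_2$--geodesic ray to a genuine positive-speed $d_1$--geodesic ray; I would not pursue this route.
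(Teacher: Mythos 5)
Your proof is correct, and your forward direction coincides with the paper's, which simply declares the only-if direction ``a consequence of the previous theorem'' (i.e.\ Theorem \ref{curvegeod} combined with Theorem \ref{alphaest}); you spell out the base-point change $\o\mapsto\o_u$ and the verification of hypotheses (i)--(iii) of Theorem \ref{curvegeod} more carefully than the paper does, and your reading of ``$u_t\in\mathcal H_{0,AM}$ with $u_0=u$'' relative to $\o_u$ is the right way to resolve the normalization tension in the statement. The converse, however, is argued by a genuinely different route. The paper invokes the $d_1$--properness of $\mathcal F$ from \cite{da4}: if a K\"ahler--Einstein metric exists, a strictly decreasing $\mathcal F$ along a unit-speed ray would trap the entire $d_1$--unbounded ray in one sublevel set, so \emph{no} geodesic ray from \emph{any} basepoint can destabilize; for $p=2$ this needs the extra CAT(0)/uniqueness step showing bounded $d_2$--rays are also $d_1$--rays. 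You instead use that the K\"ahler--Einstein potential globally minimizes $\mathcal F$ on $\mathcal E^1(X,\o)$ (Berndtsson convexity / Bando--Mabuchi, \cite{brn1,brm1}) and exhibit the single bad basepoint $u_{KE}-AM(u_{KE})$, which suffices to negate the universally quantified right-hand side. Your argument is softer --- it needs neither properness nor the restriction $p\in\{1,2\}$, as you observe --- but it proves strictly less: it does not rule out destabilizing rays from other basepoints, which is exactly what the paper's properness argument delivers and what underlies the remark after the theorem that $t\to\mathcal F(u_t)$ is eventually strictly increasing along every $d_2$--geodesic ray when a K\"ahler--Einstein metric exists. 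Both proofs are valid for the statement as written.
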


\begin{proof} The only if direction is a consequence of the previous theorem. We argue the if direction. Suppose there exists a K\"ahler--Einstein metric in $\mathcal H$.

In case $p=1$ it is enough to invoke \cite[Theorem 6]{da4}. Indeed, this result says that on a Fano manifold without non--trivial holomorphic vector--fields existence of a K\"ahler-Einstein metric in $\mathcal H$ is equivalent to the $d_1$--properness of $\mathcal F$ (sublevel sets of $\mathcal F$ are $d_1$--bounded). Hence the map $t \to \mathcal F(u_t)$ can not be bounded for any $d_1$--geodesic ray $t \to u_t$.

For the case $p=2$ we first claim that $d_2$--geodesic rays are also $d_1$--geodesic rays. Indeed, this follows from the $CAT(0)$ property of $\overline{(\mathcal H,d_2)}=(\mathcal E^2(X,\o),d_2)$ \cite{da3,cc}, as we argue now. Because of this property, $d_2$--geodesic segments connecting different points of $\mathcal H_0$ are unique, hence they are always of the type described in \eqref{EpGeodDef}, which are also $d_1$--geodesics (Proposition \ref{samegeod}). Clearly, the same statement holds for geodesic rays as well, not just segments, proving the claim. Now we can use \cite[Theorem 6]{da4} again to conclude the argument.
\end{proof}

Using the convexity of $\mathcal F$ along $d_2$--geodesics, the above proof additionally shows that there exists a K\"ahler--Einstein metric on $(X,J,\o)$ if and only if $t \to \mathcal F(u_t)$ is eventually strictly increasing for all $d_2$--geodesic rays $t \to u_t$.

\paragraph{Acknowledgments.} The first author would like to thank Yanir Rubinstein for numerous stimulating conversations related to the topic of the paper and for L\'aszl\'o Lempert for suggestions on how to improve the presentation. 

\end{document}